\documentclass[journal,twoside,web]{ieeecolor}
\usepackage{generic}
\usepackage{cite}
\usepackage{amsmath,amssymb,amsfonts}
\usepackage{algorithmic}
\usepackage{graphicx}
\usepackage{algorithm,algorithmic}
\usepackage[hidelinks]{hyperref}
\usepackage{textcomp}

\usepackage[utf8]{inputenc} 
\usepackage[english]{babel} 
\usepackage{mathtools} 
\graphicspath{{images/}} 
\usepackage{Macros}
\newtheorem{lemma}{Lemma}[section]

\newtheorem{proposition}[lemma]{Proposition}
\newtheorem{theorem}[lemma]{Theorem}

\newtheorem{corollary}[lemma]{Corollary}
\newtheorem{definition}[lemma]{Definition}
\newtheorem{example}[lemma]{Example}

\newtheorem{remark}[lemma]{Remark}

\renewcommand\theenumi{\arabic{enumi})}

\allowdisplaybreaks[4]  

\usepackage{tikz}
\usepackage{pgfplots}
\pgfplotsset{compat=1.18}
\usetikzlibrary{positioning}
\usetikzlibrary{cd}
\usetikzlibrary{shapes,fit} 

\usetikzlibrary{decorations.markings}
\tikzset{degil/.style={
		decoration={markings,
			mark= at position 0.5 with {
				\node[transform shape] (tempnode) {$\backslash$};
			}
		},
		postaction={decorate}
	}
}
\renewcommand\theenumi{(\roman{enumi})}

\if11

\newcommand{\todomir}[1]{{\color{red}\textbf{ TO DO: #1}}} 
\usepackage[normalem]{ulem}
\newcommand{\mir}[1]{{\color{red}\bf { AM: #1}}}     

\newcommand{\pbc}[1]{

{\textbf{\color{purple}PB: #1}}

}           
\else
\newcommand{\todomir}[1]{} 

\newcommand{\mir}[1]{}     

\newcommand{\pbc}[1]{}           
\fi

\begin{document}
\title{Lyapunov characterization of boundedness of reachability sets for infinite-dimensional systems}

\author{Patrick Bachmann$^{1}$,~\IEEEmembership{Student Member,~IEEE,} and 
Andrii Mironchenko$^{2}$,~\IEEEmembership{Senior Member,~IEEE} 
\thanks{*P. Bachmann (corresponding author) and A. Mironchenko have been supported by the German Research Foundation (DFG), grants MI 1886/5-1 and MI 1886/3-1, respectively.} 
\thanks{$^{1}$P. Bachmann is with the Institute of Mathematics, University of W{\"u}rzburg, Germany {\tt\small patrick.bachmann@uni-wuerzburg.de}}%
\thanks{$^{2}$A. Mironchenko is with the Department of Mathematics, University of Bayreuth, Germany {\tt\small andrii.mironchenko@uni-bayreuth.de}}%
}

\maketitle
\thispagestyle{empty}
\pagestyle{empty}

\begin{abstract}
    We prove a converse Lyapunov theorem for boundedness of reachability sets for a general class of control systems whose flow is Lipschitz continuous on compact intervals with respect to trajectory-dominated inputs. We show that this condition is satisfied by many semi-linear evolution equations. 
    For ordinary differential equations, as a consequence of our results, we obtain a converse Lyapunov theorem for forward completeness, without a priori restrictions on the magnitude of inputs.
\end{abstract}

\begin{keywords}
    Distributed parameter systems; nonlinear systems; Lyapunov function; converse Lyapunov theorems; reachability sets
\end{keywords}


\section{Introduction}\label{sec:introduction}

A control system is called forward complete if for any initial condition $x$ and any input $u$, the corresponding trajectory $\phi(\cdot,x,u)$ is well-defined on the whole nonnegative time axis. If additionally, for any magnitude $R>0$ and any time $\tau>0$
\[
\sup_{\|x\| \leq R,\ \|u\| \leq R,\ t\in[0,\tau]} \|\phi(t,x,u)\| <+\infty,
\] 
then a control system is said to have bounded (finite-time) reachability sets.

\emph{Boundedness of reachability sets (BRS)} is essential for establishing regularity properties of the flow maps for (in)finite-dimensional nonlinear systems \cite[Theorem 1.40]{Mir23},  \cite[Section 3.5]{Mir24}. 
 Criteria for input-to-state stability in terms of the uniform limit property, local stability, and boundedness of reachability sets have been shown for general nonlinear control systems in \cite{MiW18b}. These characterizations, in turn, paved the way for the development of non-coercive Lyapunov methods \cite{MiP20}.

Robust forward completeness (RFC) is closely related to boundedness of reachability sets, and was important for characterization of
uniform global asymptotic stability for general infinite-dimensional systems in terms of uniform weak attractivity, local stability, and RFC in \cite{MiW19a}, and for characterization of global asymptotic stability for retarded systems \cite{KPC22}.

Sufficient conditions for forward completeness for ordinary differential equations (ODEs) and other classes of control systems are a classical subject \cite{Win45, Had72, Jus67, Tan92, Iwa83}. In particular,
if $f:\R^n\to\R^n$ is globally Lipschitz or linearly bounded, then the solutions of
\[
\dot{x} = f(x)
\]
exist globally, and the reachability sets are bounded. More general sufficient conditions are given by Wintner's theorem \cite{Win45}.

Necessary conditions for forward completeness are much less investigated.
Necessary and sufficient conditions of Lyapunov type for forward completeness of ODEs without inputs have been proposed in \cite{KaS67}. However, Lyapunov functions constructed in \cite{KaS67} are time-variant even for time-invariant ODEs.

In \cite{LSW96}, for systems 
\begin{align}
\label{eq:intro-ODE}
    \dot{x} = f(x,u)
\end{align}
with Lipschitz continuous $f$, it was shown that forward completeness is equivalent to the boundedness of reachability sets.
In \cite{AnS99}, the authors have shown that for ODEs with uniformly bounded inputs ($\esssup_{t\geq 0}|u(s)| \leq R$ for some \emph{fixed} $R>0$), forward completeness is equivalent to the existence of a Lyapunov-like function that increases at most exponentially.

For distributed parameter systems, the situation is more complex. Linear forward complete infinite-dimensional systems necessarily have bounded reachability sets \cite[Proposition 2.5]{Wei89b}. However, nonlinear forward complete infinite-dimensional systems with Lipschitz continuous right-hand sides do not necessarily have bounded reachability sets, as demonstrated in \cite[Example 2]{MiW18b} for nonlinear infinite networks. 

The subtle relation between forward completeness and boundedness of reachability sets is best reflected by time-delay systems. For delay systems defined over the canonical state space of continuous functions, forward completeness is weaker than BRS \cite{MaH24}, which has profound implications such as the loss of the uniformity of the limit property.  
However, forward completeness is equivalent to BRS for delay systems with pointwise delays and over the state space of $\LL^\infty$ functions \cite{BCM24} as well as if the state space is chosen to be a Sobolev space \cite{KPC22}.

These relations indicate that the BRS property (establishing uniform bounds for solutions on finite time intervals) is a bridge between pure well-posedness theory (that studies existence and uniqueness, but does not care much about the bounds for solutions) and stability theory (which is interested in establishing certain bounds for solutions for all nonnegative times, as well as their convergence).

In \cite{Mir23e}, a necessary and sufficient Lyapunov criterion for RFC was provided. Furthermore, \cite{Mir23e} introduces the concept of a BRS Lyapunov function and shows that for a system satisfying the boundedness implies continuation (BIC) property, existence of a BRS Lyapunov function implies BRS. The converse direction, however, remained open, as the strategy used in \cite{Mir23e}, cannot be straightforwardly extended to the more complex BRS case.

\textbf{Contribution:} In this work, we consider a general class of control systems including ODEs, evolution partial differential equations, switched and time-delay systems. For this class of systems, we show that \emph{BRS is equivalent to the existence of a BRS Lyapunov function} for systems whose flow is \emph{Lipschitz continuous on compact intervals with respect to trajectory-dominated inputs}. This novel concept gives a Lipschitz bound for trajectories of two different initial conditions and different input functions and turns out to be essential for proving Lipschitz continuity of the BRS Lyapunov function.

Moreover, we show that for semi-linear evolution equations with bi-Lipschitz nonlinearity and, as a subclass, ODEs with bi-Lipschitz right-hand side, BRS implies Lipschitz continuity of the flow on compact intervals with respect to trajectory-dominated inputs.
Furthermore, we introduce the concept of \emph{robust forward completeness with respect to trajectory-dominated inputs} (i.e., inputs bounded by a $\KK_\infty$-function of the norm of the trajectory), and prove its equivalence to BRS. Employing this characterization of BRS, and motivated by the characterization of classical RFC, we prove the converse Lyapunov theorem for the BRS property. As BRS is a property shared by many control systems of interest, the fact that we can construct a Lyapunov function for such systems 
is a good start to study more advanced properties of these systems.

Let us stress the novelty of our technique for proving the converse Lyapunov results. 
The classical proof of converse Lyapunov results for systems with inputs (such as \eqref{eq:intro-ODE}, but also of more general ones) was proposed in \cite{SoW95}, where a converse Lyapunov theorem for input-to-state stability of system with Lipschitz nonlinearities \eqref{eq:intro-ODE} has been proved. The proof in \cite{SoW95} consists of two steps: first, one proves the equivalence of ISS and robust stability of an auxiliary system with \emph{noisy state feedback}, where the inputs belong 
to a closed bounded ball, and then, one invokes a converse Lyapunov result \cite{LSW96} for this auxiliary system. In \cite{MiW17c}, this strategy was used to prove the converse Lyapunov result for input-to-state stability (ISS) of infinite-dimensional systems, using the converse Lyapunov results from \cite{KaJ11a} and was extended to impulsive systems in \cite{BAB24}. The proof of the converse Lyapunov result for the unboundedness observability property in \cite{AnS99} uses this strategy as well, first formulating an auxiliary system with a noisy \emph{output injection}, and proving a converse Lyapunov result for robust forward completeness of this auxiliary system in the second step.


In a similar fashion, integral input-to-state stability (iISS) is characterized by iISS Lyapunov functions via global asymptotic stability for the 0-input (0-GAS) and forward completeness in \cite{ASW00}, and for input-output-to-state stability (IOSS), uniform output-to-state stability of an auxiliary system is employed to derive according IOSS Lyapunov functions \cite{KSW01}. For input-to-output stability (IOS), robust output stability is used to generate IOS converse Lyapunov theorems \cite{SoW00,InW01} and, in the more recent approach via an auxiliary system with a \emph{regularized output} \cite{BDM25}, the strong robust output stability property is a necessary and sufficient condition for existence of a vector IOS Lyapunov function.

The need to formulate the auxiliary system with a noisy state feedback makes it necessary to analyze the well-posedness of this system, and set extra requirements on the original system to make this analysis functioning. If the system is given in an abstract form, then formulating such an auxiliary system can be also a challenging problem, as there are no equations of motion. 
\emph{In this work, we go another way, and give a direct proof of the converse Lyapunov theorem for the BRS property. This is possible thanks to the new stability and regularity notions for systems with trajectory-dominated inputs.}

We hope that this method can be used to develop the converse Lyapunov for other stability notions, most notably for ISS and its extensions, which will be an interesting topic for future research.

\textbf{Notation:}
In the sequel, the nonnegative integers are denoted by $\N_0$, the natural numbers by $\N$, the real numbers by $\R$, the nonnegative real numbers by $\R_+$ and the open balls of radius $C$ around zero in Banach spaces $X$, $U$ and $\U$, respectively, by $B_C$, $B_{C,U}$ and $B_{C,\U}$. For a subset $\Omega$ of a Banach space, we denote its closure by $\overline{\Omega}$. 

We call a map $f\colon X \to Y$ between two Banach spaces $X,Y$ \emph{Lipschitz continuous on bounded sets}, if for each $C > 0$, there exists $L_C > 0$ such that for all $x_1,x_2 \in B_C$, it holds that
\begin{align*}
    \norm{f(x_1) - f(x_2)}_Y 
    &\leq L_C \norm{x_1 - x_2}_X.
\end{align*}
On finite-dimensional spaces, the notion of Lipschitz continuity on bounded sets and \emph{local Lipschitz continuity} are equivalent \cite[Proposition 1.51]{Mir23}. On infinite-dimensional spaces, however, Lipschitz continuity on bounded sets is a stronger property (see \cite[Rem. 4]{BAB24}).

We consider the standard classes of comparison functions (cf. \cite[p. xvi]{Mir23})
\begin{align*}
    \PP &\coloneq \{\gamma: \R_+ \to \R_+ \,|\, \gamma(0) = 0,\ \gamma \text{ is continuous},  \\
    &\hspace{4cm}\gamma(s) > 0 \text{ for all } s > 0\}, \\
    \KK &\coloneq \{\gamma \in \PP \,|\, \gamma \text{ is strictly increasing}\}, \\
    \KK_\infty &\coloneq \{\gamma\in \KK \,|\, \gamma \text{ is unbounded}\}, \\
    \LL &\coloneq \{\gamma: \R_+ \to \R_+ \,|\, \gamma \text{ is continuous and decreasing}  \\
    &\hspace{4cm}\text{with }\lim_{t \to \infty}\gamma(t) = 0\}, \\
    \KK\LL &\coloneq \{\beta \in \R_+ \times \R_+ \to \R_+ \,|\, \beta(\ph,t) \in \KK, \ \forall t \geq 0, \\
    &\hspace{4cm}\beta(r, \ph) \in \LL,\ \forall r > 0\}.
\end{align*}
Moreover, let
\begin{align*}
    \operatorname{Lip}_1&\coloneq \{\gamma\colon \R_+ \to \R_+ \,|\, \gamma \text{ is globally Lipschitz continuous} \\ &\hspace{1.7cm}\text{with Lipschitz constant } L_C = 1, \ \forall C > 0\}.
\end{align*}

For $I \subset \R_+$ and a Banach space $Z$, we denote  by $\LL^\infty(I, Z)$ the Lebesgue space of strongly measurable functions $f \colon I \to Z$ with a finite norm $\norm{f}_{\LL^\infty(I, Z)} \coloneq \esssup_{t \in I}\norm{f(t)}_Z$.

\section{Preliminaries}\label{sec:preliminaries}
\subsection{Abstract control systems}

At first, we define the notion of an abstract control system.
\begin{definition}\label{def:controlSystem}
    Consider a triple $\Sigma = (X, \U, \phi)$ consisting of
    \begin{enumerate}
        \item a normed vector space $(X, \norm{\ph}_X)$, called the \emph{state space}.
        \item a normed \emph{vector space of input values} $(U, \norm{\ph}_U)$ and the normed \emph{vector space of admissible inputs} $\U = \LL^\infty(\I, U)$, equipped with the essential supremum norm, which we denote by $\norm \ph_\U$.
        For $u_1,u_2 \in \U$, we denote the concatenation at time $t \in \I$
        by
        \begin{align*}
            u_1 \diamond_t u_2 (s)
            = \begin{cases}
                u_1(s), &\text{if } s \in [0,t),\\
                u_2(s-t), &\text{if } s \geq t,
            \end{cases}
        \end{align*}
        \item\label{def:controlSystemProperty4} a map $\phi\colon D_\phi \to X$, $D_\phi \subset \I \times X \times \U$, called \emph{transition map}, so that for all $(x,u) \in  X \times \U$ it holds that $D_\phi \cap (\I \times \{(x,u)\}) = [0,t_{\max}) \times \{(x,u)\}$, for a certain $t_{\max} = t_{\max}(x,u) \in (0, + \infty]$. The corresponding interval $[0,t_{\max})$ is called the \emph{maximal domain of definition} of the mapping $t \mapsto \phi(t, x, u)$, which we call a \emph{trajectory} of the system.
    \end{enumerate}
    
    The triple $\Sigma$ is called a \emph{(control) system} if it satisfies the following axioms.
    \begin{enumerate}
        \renewcommand\theenumi{($\Sigma$\arabic{enumi})}
        \item\label{cond:identityProperty} \emph{Identity property}: For all $(x,u) \in X \times \U$, it holds that $\phi(0, x, u) = x$.
        \item\label{cond:causalityProperty} \emph{Causality}: For all $(t,x,u) \in D_\phi$ and all $\widetilde u \in \U$ such that $u(s)= \widetilde u(s)$ for all $s \in [0,t]$, it holds that $[0,t] \times \{(x, \widetilde u)\} \subset D_\phi$ and $\phi(t,x,u) = \phi(t,x,\widetilde u)$.
        \item\label{cond:continuityProperty} \emph{Continuity}: For each $(x,u) \in X \times \U$ the map $t \mapsto \phi(t,x,u)$ is continuous on its maximal domain of definition.
        \item\label{cond:cocycleProperty} \emph{Cocycle property}: For all $x \in X$, $u \in \U$ and $t,s \geq 0$ so that $[0,t + s] \times \{(x,u)\} \subset D_\phi$, we have 
        \begin{align*}
            \phi(t + s, x, u) &= \phi\!\paren{s, \phi(t, x, u), u(\ph + t)}.
        \end{align*}
    \end{enumerate}
\end{definition}

\begin{remark}
    This class of control systems is similar to that considered in, e.g., \cite{MiW18b,Mir23e,BDM26} and comprises ODEs, evolution partial differential equations, sufficiently regular boundary control systems \cite[Sec. 4.1]{MiP20}, \cite{Sch20}, switched systems, time-delay systems, and broad classes of well-posed linear systems.
    
    As opposed to \cite{BDM26}, for the development of the Lyapunov theory, we restrict the system class to continuous-time systems and require $\U = \LL^\infty(\I, U)$, and continuity of trajectories.
    
    However, instead of considering $\U = \LL^\infty(\I, U)$, the results in this article can be transferred to $\U$ being the piecewise continuous functions over $U$ with a very similar proof.
\end{remark}

\begin{definition}\label{def:forwardComplete}
    We call a control system $\Sigma = (X,\U,\phi)$ \emph{forward complete (FC)}, if for each $x \in X$, $u \in \U$ and $t \in \I$, the value $\phi(t,x,u) \in X$ is well-defined.
\end{definition}

\subsection{Regularity of solutions}

We proceed to the main concept of this article.
\begin{definition}[\!\!\cite{MiW18b}]\label{def:BRS}
    A forward complete control system $\Sigma$ is said to have \emph{bounded reachability sets (BRS)} if for all $C > 0$ and $\tau \in \I$ it holds that
	\begin{align*} 
		\sup_{\norm{x}_X < C,\, \norm{u}_\U < C, \ t < \tau}\norm{\phi(t,x,u)}_X < \infty.
	\end{align*}
\end{definition}

Inspired by the approach in \cite{SoW95} for the construction of ISS Lyapunov functions, we introduce the following notions:
\begin{definition}\label{def:TDI}
    Let $\Sigma$ be a forward complete control system and $\eta \in \KK_\infty$. For a given $x \in X$, we call the set
    \begin{align*}
        \U_x^\eta \coloneq \Big\{v \in\U\,|
        \norm{v(\theta)}_U 
        \leq \eta\!\paren{\norm{\phi(\theta,x,v)}_X} \text{ for a.e. } \theta \ge 0\Big\}\!,    
    \end{align*}
    the \emph{set of trajectory-dominated inputs with respect to initial value $x \in X$ and growth margin $\eta$}. We will use the colloquial term \emph{set of trajectory-dominated inputs} whenever $x$ and $\eta$ are clear from the context.
\end{definition}
Note that for any $x\in X$ and $\eta\in\Kinf$ we have that $0 \in \U_x^\eta$, and thus $\U_x^\eta \neq\emptyset$.
\begin{definition}\label{def:RFCwrtTDI}
    A forward complete control system $\Sigma$ is said to be \emph{robustly forward complete with respect to trajectory-dominated inputs} if there exists $\eta \in \KK_\infty$ such that for all $C > 0$ and $\tau \in \I$,
    it holds that
	\begin{align*} 
		\sup_{\norm{x}_X < C,\, u \in \U_x^\eta, \ t < \tau}\norm{\phi(t,x,u)}_X < \infty.
	\end{align*}
\end{definition}
In the literature, several slight deviations of the terminology \emph{robust forward completeness (RFC)} exist \cite[Def. 1.4 (2)]{KaJ11a}, \cite{MiW19a,Mir23e} which differ in the set of admissible inputs. The notion of robust forward completeness with respect to trajectory-dominated inputs adds one more notion to those as the set of admissible inputs is directly related to the trajectories evolving from these inputs. The main motivation for our definition is that it gives an equivalent characterization of BRS as we will show in the course of this article.

The notion of RFC, that we will use in the following is given in the following definition: Let $\D = \overline{B_{1,\U}}$.
\begin{definition}\label{def:RFC}
    A forward complete control system $\Sigma$ is said to be \emph{robustly forward complete with respect to inputs in $\D$} if for all $C > 0$ and $\tau \in \I$,
    it holds that
	\begin{align*} 
		\sup_{\norm{x}_X < C,\, d \in \D, \ t < \tau}\norm{\phi(t,x,d)}_X < \infty.
	\end{align*}
\end{definition}

\section{Reachability and
behavior for trajectory-dominated inputs}\label{sec:BRSandTDI}

Before we state our main result, we provide several non-Lyapunov type characterizations of BRS and robust forward completeness with respect to trajectory-dominated inputs, which will be helpful in the proof of our main result but are also useful in their own right.

We call a function $h\colon \R_+^n \to \R_+$, $n \in \N$ \emph{component-wise increasing} if $(r_1, \dots, r_n) \leq (R_1, \dots, R_n)$ implies that $h(r_1, \dots, r_n) \leq h(R_1, \dots, R_n)$, where we use the component-wise partial order defined by the cone $\R_+^n$. 
\begin{proposition}[BRS Characterization]\label{prop:BRScharacterization}
    Let $\Sigma$ be a forward complete control system. The following are equivalent:
    \begin{enumerate}
        \item\label{cond:BRScharacterization1} $\Sigma$ has BRS.
        \item\label{cond:BRScharacterization2} There exists a continuous and component-wise increasing function $\mu\colon \R_+^3 \to \R_+$ such that for all $x \in X$, $u \in \U$ and $t \in \I$, it holds that
        \begin{align*}
            \norm{\phi(t,x,u)}_X \leq\mu\!\paren{\norm{x}_X\!, \norm{u}_\U\!,t}\!.
        \end{align*}
        \item\label{cond:BRScharacterization3} There exist $\chi_1,\chi_2,\chi_3 \in \KK$ and a constant $c\geq 0$ so that for each $x \in X$ and $u \in \U$ and $t \in \I$, it follows that
        \begin{align*}
            \norm{\phi(t,x,u)}_X &\leq \chi_1(t) + \chi_2\!\paren{\norm{x}_X} + \chi_3\!\paren{\norm{u}_\U} + c. 
        \end{align*}
    \end{enumerate}
\end{proposition}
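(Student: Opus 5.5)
We prove the cycle \ref{cond:BRScharacterization1} $\Rightarrow$ \ref{cond:BRScharacterization2} $\Rightarrow$ \ref{cond:BRScharacterization3} $\Rightarrow$ \ref{cond:BRScharacterization1}. The last implication is immediate: if $\norm{x}_X<C$, $\norm{u}_\U<C$ and $t<\tau$, then monotonicity of the $\KK$-functions gives $\norm{\phi(t,x,u)}_X\le \chi_1(\tau)+\chi_2(C)+\chi_3(C)+c<\infty$, and forward completeness is assumed.

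For \ref{cond:BRScharacterization1} $\Rightarrow$ \ref{cond:BRScharacterization2}, define for $r,s,\tau\ge 0$
\[
\tilde\mu(r,s,\tau)\coloneq \sup_{\norm{x}_X\le r,\ \norm{u}_\U\le s,\ t\le \tau}\norm{\phi(t,x,u)}_X .
\]
This is finite by BRS, since closed balls of radius $r$ lie in open balls of radius $r+1$. It is component-wise increasing because the supremum is taken over nested sets, and it dominates $\norm{\phi(t,x,u)}_X$ when evaluated at $(\norm{x}_X,\norm{u}_\U,t)$. The main obstacle is that $\tilde\mu$ need not be continuous. To fix this, we majorize a monotone function by a continuous monotone one via averaging. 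Set
\[
\hat\mu(r,s,\tau)\coloneq \int_{r}^{r+1}\!\int_{s}^{s+1}\!\int_{\tau}^{\tau+1}\tilde\mu(a,b,c)\,dc\,db\,da .
\]
Monotone functions are Borel measurable and locally bounded, so the integral exists. Monotonicity gives $\hat\mu\ge \tilde\mu$. Since the integrand is bounded on compacts, $\hat\mu$ is continuous: moving the integration box by $\delta$ changes the integral by at most $\delta$ times the local bound, times a constant. Moreover, $\hat\mu$ is again component-wise increasing, because shifting the box upward only increases the integrand pointwise. Taking $\mu\coloneq\hat\mu$ yields \ref{cond:BRScharacterization2}.

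For \ref{cond:BRScharacterization2} $\Rightarrow$ \ref{cond:BRScharacterization3}, we use monotonicity and $\mu(a,b,c)\le \mu(m,m,m)$ with $m=\max\{a,b,c\}$. Define $\kappa(m)\coloneq \mu(m,m,m)-\mu(0,0,0)$. This is continuous, nondecreasing and satisfies $\kappa(0)=0$. Replace it by the $\KK$-function $\bar\kappa(m)\coloneq\kappa(m)+m\ge\kappa(m)$. Then
\[
\norm{\phi(t,x,u)}_X\le \bar\kappa(\max\{\norm{x}_X,\norm{u}_\U,t\})+\mu(0,0,0)\le \bar\kappa(t)+\bar\kappa(\norm{x}_X)+\bar\kappa(\norm{u}_\U)+c
\]
with $c\coloneq\mu(0,0,0)$, since the maximum of three nonnegative numbers is bounded by their sum. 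We therefore take $\chi_1=\chi_2=\chi_3=\bar\kappa$.

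The only nontrivial point is thus the continuity and monotonicity of the smoothed majorant in the first implication. Everything else is bookkeeping with monotonicity of comparison functions.
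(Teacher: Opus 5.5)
Your proof is correct and more self-contained than the paper's, so this is a comparison rather than a correction. The paper does not prove (i)$\Rightarrow$(ii) itself. It cites \cite[Lem.~3]{MiW18b} for (i)$\Leftrightarrow$(ii), and for the return direction it goes (iii)$\Rightarrow$(ii) by a trivial choice of $\mu$ and then back to (i) through the same citation. You instead build the continuous majorant by hand, averaging the supremum function $\tilde\mu$ over a unit box, which even yields a locally Lipschitz $\mu$. You also close the cycle with the direct estimate (iii)$\Rightarrow$(i). Your (ii)$\Rightarrow$(iii) step is the paper's argument with a genuine repair. The paper takes $\chi_i(s)=\mu(s,s,s)-\mu(0,0,0)$ and calls it a class $\KK$ function, but this function is only nondecreasing. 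For the trivial system $\phi(t,x,u)=x$, the admissible choice $\mu(r,s,t)=\max\{r,1\}$ makes it vanish on $[0,1]$, so adding $m$, as you do, is exactly what is needed. One small precision: a component-wise increasing function on $\R_+^3$ need not be Borel measurable. For example, the indicator of $\{a+b+c>3\}\cup S$, with $S$ a non-Borel subset of the plane $a+b+c=3$, is such a function. Your sentence ``monotone functions are Borel measurable'' should therefore be read section-wise. Because your integral is iterated, each successive integrand ($c\mapsto\tilde\mu(a,b,c)$, then $b\mapsto\int_\tau^{\tau+1}\tilde\mu(a,b,c)\,dc$, then the outer one) is a monotone function of one variable, hence Riemann integrable. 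Your continuity and monotonicity arguments then go through unchanged. You could also avoid the three-dimensional averaging entirely by smoothing only the one-variable function $R\mapsto\tilde\mu(R,R,R)$, e.g. $\bar\omega(R)=\int_R^{R+1}\tilde\mu(a,a,a)\,da$, and taking $\mu(r,s,\tau)=\bar\omega(\max\{r,s,\tau\})$.
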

\begin{proof}
    The equivalence \ref{cond:BRScharacterization1}$\iff$\ref{cond:BRScharacterization2} is taken from \cite[Lem.~3]{MiW18b}. 
    
    \textit{\ref{cond:BRScharacterization2}$\implies$\ref{cond:BRScharacterization3}:} Let $\chi_1(s) = \chi_2(s) = \chi_3(s) := \mu(s,s,s)-\mu(0,0,0)$ for $s \geq 0$. As $\mu$ is component-wise increasing and continuous, this defines a class $\KK$ function. With $c \coloneq \mu(0,0,0)$, we obtain
    \begin{align*}
        &\norm{\phi(t,x,u)}_X 
        \leq \mu\!\paren{\norm{x}_X\!, \norm{u}_\U\!,t} - \mu(0,0,0) + \mu(0,0,0) \\
        &\leq \max\argbraces{\mu\!\paren{\norm{x}_X\!,\! \norm{x}_X\!,\!\norm{x}_X\!}\!, \!\mu\!\paren{\norm{u}_\U\!, \!\norm{u}_\U\!,\!\norm{u}_\U\!}\!,\! \mu\!\paren{t, t, t}} \\
        &\qquad- \mu(0,0,0) + \mu(0,0,0) \\
        &\leq \chi_1(t) + \chi_2\!\paren{\norm{x}_X} + \chi_3\!\paren{\norm{u}_\U} + c.
    \end{align*}
    
    \textit{\ref{cond:BRScharacterization3}$\implies$\ref{cond:BRScharacterization2}:} The statement follows from the definition $\mu(t,r,s) \coloneq \chi_1(r) + \chi_2\!\paren{r} + \chi_3\!\paren{s} + c$.
\end{proof}

Similarly, we characterize robust forward completeness with respect to trajectory-dominated inputs.
\begin{proposition}
\label{prop:RFCcharacterization}
\emph{(Criterion of robust forward completeness with respect to trajectory-dominated inputs)} 
    Let $\Sigma$ be a forward complete control system.
    Then, the following are equivalent:
    \begin{enumerate}
        \item\label{cond:RFCcharacterization1} $\Sigma$ is robustly forward complete with respect to trajectory-dominated inputs with a certain growth margin $\eta \in \KK_\infty$.
        \item\label{cond:RFCcharacterization2} There exist $\eta \in \KK_\infty$ and a continuous and component-wise increasing function $\nu\colon \R_+^2 \to \R_+$ such that for all $x \in X$, $u \in \U_x^\eta$, and $t \in \I$, it holds that
        \begin{align*}
            \norm{\phi(t,x,u)}_X \leq\nu\!\paren{\norm{x}_X\!,t}\!.
        \end{align*}
        \item\label{cond:RFCcharacterization3} There exists $\kappa \in \KK_\infty\cap\operatorname{Lip}_1$ and a constant $c \geq 0$ so that for all $x \in X$, $u \in \U_x^\kappa$, and $t \in \I$, it holds that
        \begin{align}
        \label{eq:RFC-kappa-characterization}
            \norm{\phi(t,x,u)}_X \leq\kappa^{-1}\!\paren{t + \norm{x}_X + c}\!.
        \end{align}
    \end{enumerate}
    Moreover, $\eta$ in \ref{cond:RFCcharacterization1} and \ref{cond:RFCcharacterization2} can be chosen to be the same.
\end{proposition}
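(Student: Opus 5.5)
We prove the equivalences through the cycle \ref{cond:RFCcharacterization1}$\Rightarrow$\ref{cond:RFCcharacterization2}$\Rightarrow$\ref{cond:RFCcharacterization3}$\Rightarrow$\ref{cond:RFCcharacterization1}, keeping the growth margin $\eta$ fixed in the first step so that the final claim holds automatically.

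\emph{\ref{cond:RFCcharacterization1}$\Rightarrow$\ref{cond:RFCcharacterization2}.} Fix the $\eta$ from \ref{cond:RFCcharacterization1} and set
\[
\tilde\nu(r,\tau) \coloneq \sup_{\norm{x}_X < r+1,\; u \in \U_x^\eta,\; t<\tau+1}\norm{\phi(t,x,u)}_X,
\]
which is finite by assumption. This function is component-wise increasing in $(r,\tau)$, since enlarging $r$ or $\tau$ only enlarges the set over which the supremum is taken. The point to check is that the admissible input set $\U_x^\eta$ depends only on $x$ and not on $r$ or $\tau$, so monotonicity is indeed clear. Next we replace $\tilde\nu$ by a continuous increasing majorant. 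The standard device of \cite[Lem.~3]{MiW18b} does this; for instance, take
\[
\nu(r,\tau) \coloneq \int_r^{r+1}\!\!\int_\tau^{\tau+1}\tilde\nu(s,\theta)\,d\theta\,ds,
\]
which is continuous, component-wise increasing, and satisfies $\nu\ge \tilde\nu(r,\tau)$. Then $\norm{\phi(t,x,u)}_X\le\nu(\norm{x}_X,t)$ for all $u\in\U_x^\eta$, with the same $\eta$.

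\emph{\ref{cond:RFCcharacterization2}$\Rightarrow$\ref{cond:RFCcharacterization3}.} This is the main step. Set
\[
\omega(s)\coloneq \nu(s,s)-\nu(0,0)+s,
\]
which is increasing, continuous, and belongs to $\KK_\infty$. With $c\coloneq\nu(0,0)$, the bound $\nu(a,b)\le \nu(a+b,a+b)$ gives
\[
\norm{\phi(t,x,u)}_X\le \omega(t+\norm{x}_X)+c\le \tilde\omega(t+\norm{x}_X+c),
\]
where $\tilde\omega(s)\coloneq\omega(s)+s$, and the second inequality uses $\omega(s)\le \omega(s+c)$. We now want $\kappa\in\KK_\infty\cap\operatorname{Lip}_1$ with $\kappa\le\eta$ and $\kappa^{-1}\ge\tilde\omega$, i.e.\ $\kappa\le\tilde\omega^{-1}$. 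Take
\[
\kappa(s)\coloneq\inf_{r\ge0}\bigl(\min\{\eta,\tilde\omega^{-1}\}(r)+|s-r|\bigr).
\]
This is the $1$-Lipschitz lower envelope; it lies below $\min\{\eta,\tilde\omega^{-1}\}$ and is $\KK_\infty$. If strictness fails, we use half of it, e.g.\ $\tfrac12$ of the envelope. Because $\kappa\le\eta$, we have $\U_x^\kappa\subset\U_x^\eta$, so the bound from \ref{cond:RFCcharacterization2} applies to every $u\in\U_x^\kappa$. Finally, $\kappa\le\tilde\omega^{-1}$ yields $\kappa^{-1}\ge\tilde\omega$, which gives \eqref{eq:RFC-kappa-characterization}. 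The delicate point is verifying that the Lipschitz envelope stays in $\KK_\infty$: it must be strictly increasing and unbounded. Unboundedness follows because $\min\{\eta,\tilde\omega^{-1}\}$ is unbounded and the envelope differs from it by a controlled amount. Strict increase is where the halving trick, or adding a small term such as $\min\{s,1\}$ after scaling, may be needed.

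\emph{\ref{cond:RFCcharacterization3}$\Rightarrow$\ref{cond:RFCcharacterization1}.} Take $\eta\coloneq\kappa$. For $\norm{x}_X<C$, $u\in\U_x^\kappa$ and $t<\tau$, the bound \eqref{eq:RFC-kappa-characterization} gives $\norm{\phi(t,x,u)}_X\le\kappa^{-1}(\tau+C+c)<\infty$, which is exactly \ref{cond:RFCcharacterization1}.
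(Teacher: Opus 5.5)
Your proof is correct and is essentially the paper's. You prove (i)$\Rightarrow$(ii) by the standard continuous-majorant argument with the same $\eta$. You prove (ii)$\Rightarrow$(iii) by bounding $\nu(\|x\|_X,t)$ through a $\KK_\infty$ function of $\|x\|_X+t$ and choosing a $1$-Lipschitz $\kappa$ below both $\eta$ and that function's inverse. The only differences are that you absorb $c$ via the extra $+s$ in $\tilde\omega$ rather than via $\kappa^{-1}(a)+c\le\kappa^{-1}(a+c)$, and that you close the cycle through (iii)$\Rightarrow$(i). For the ``moreover'' claim you should therefore state explicitly that (ii) with $\eta$ gives (i) with the same $\eta$, since the supremum is at most $\nu(C,\tau)$.

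Your hedging about $\kappa$ is unnecessary, and your ``controlled amount'' argument for unboundedness is not valid: for $g(s)=s^2$ the envelope is $s-\tfrac14$ for $s\ge\tfrac12$. The correct justification is direct. With $g=\min\{\eta,\tilde\omega^{-1}\}\in\KK_\infty$, the envelope equals $\inf_{r\in[0,s]}(g(r)+s-r)$. It is strictly increasing because $g$ is, and it satisfies $\kappa(s)\ge\min\{s/2,g(s/2)\}$. Hence $\kappa\in\KK_\infty\cap\operatorname{Lip}_1$ without any halving, which could not restore strict monotonicity anyway.
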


\begin{proof}
    \textit{\ref{cond:RFCcharacterization1}$\iff$\ref{cond:RFCcharacterization2}:} The proof is analogous to \cite[Lem. 2.12]{MiW19a}. In particular, $\eta$ can be chosen the same in both statements.
    
    \textit{\ref{cond:RFCcharacterization2}$\implies$\ref{cond:RFCcharacterization3}:} Without loss of generality, we assume that $\nu$ is unbounded in both arguments. Let $\overline{\nu} \in \KK_\infty$ be defined by $\overline{\nu}(s) = \nu(s,s) - \nu(0,0)$ and $\kappa \in \KK_\infty \cap \operatorname{Lip}_1$ 
    such that 
    \[
    \kappa(s) \leq \min\argbraces{\overline{\nu}^{-1}(s),\eta(s)}.
    \]
    Such $\kappa$ exists by \cite[Lem. A.18]{Mir23}. Then, by setting $c \coloneq \nu(0,0) \geq 0$, and noting that $\overline{\nu}(s)\leq \kappa^{-1}(s)$, $s\ge0$, we obtain
    \begin{align}
        \nu(\norm{x}_X\!,t) 
        &\leq \nu(\norm{x}_X + t,\norm{x}_X + t) \nonumber\\
        &= \overline{\nu}(\norm{x}_X + t) + \nu(0,0) \nonumber\\
        &\leq \kappa^{-1}(\norm{x}_X + t) + c
    \label{ineq:RFCcomponentwiseBoundedByUniformFunction}
    \end{align}
    As $\kappa$ is Lipschitz continuous with a unit Lipschitz constant, 
    we have that
    \begin{align*}
        s + \kappa(r) - \kappa(s) \leq r, \quad \text{for all } r \geq s \geq 0.
    \end{align*}
    Now by the substitution $r \coloneq \kappa^{-1}(\norm{x}_X + t + c)$ and $s \coloneq \kappa^{-1}(\norm{x}_X + t)$, we have the inequality
    \begin{align*}
        &\kappa^{-1}(\norm{x}_X + t) + c \\
        &\qquad= \kappa^{-1}(\norm{x}_X + t) + \kappa(\kappa^{-1}(\norm{x}_X + t + c)) \\
        &\qquad\qquad- \kappa(\kappa^{-1}(\norm{x}_X + t)) \\
        &\qquad\leq \kappa^{-1}(\norm{x}_X + t + c).
    \end{align*}
    Combining this estimate with  \eqref{ineq:RFCcomponentwiseBoundedByUniformFunction} and since $\U_x^\kappa \subset \U_x^\eta $, we see that for all $x \in X$, $u \in \U_x^\kappa$ and all $t \in \I$, it holds that
        \begin{align*}
            \norm{\phi(t,x,u)}_X \leq \nu(\norm{x}_X\!,t) \leq\kappa^{-1}\!\paren{t + \norm{x}_X + c}\!.
        \end{align*}    
    \textit{\ref{cond:RFCcharacterization3}$\implies$\ref{cond:RFCcharacterization2}:} Directly follows from the definitions $\eta \coloneq \kappa$ and $\nu(r,s) \coloneq \kappa^{-1}\!\paren{s + r + c}$ for all $r,s \geq 0$.
\end{proof}

Note that in Proposition~\ref{prop:RFCcharacterization}, item \ref{cond:RFCcharacterization2}, the input growth margin $\eta$ and the upper bound of the solutions $\nu$ are chosen independently, while in Proposition~\ref{prop:RFCcharacterization}, item \ref{cond:RFCcharacterization3} we show that one can choose a growth margin for the inputs so small, so that its inverse can serve as an upper bound for the solutions of the system.

Given the above characterizations, we now show that BRS implies robust forward completeness with respect to trajectory-dominated inputs.
\begin{lemma}\label{lem:BRStoRFC}
    Let $\Sigma$ be a forward complete control system. If $\Sigma$ is BRS, then it is robustly forward complete with respect to trajectory-dominated inputs.

    Moreover, the growth margin $\eta \in \KK_\infty$ for robustly forward completeness with respect to trajectory-dominated inputs can be chosen such that $\eta \in \operatorname{Lip}_1$.
\end{lemma}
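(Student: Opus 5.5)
We will combine the uniform bound from Proposition~\ref{prop:BRScharacterization} with a causality argument, and choose the growth margin $\eta$ small enough that the input contribution can be absorbed into the left-hand side. Since $\Sigma$ has BRS, Proposition~\ref{prop:BRScharacterization}\ref{cond:BRScharacterization3} gives $\chi_1,\chi_2,\chi_3\in\KK$ and $c\ge 0$ with
\[
\norm{\phi(t,x,u)}_X \le \chi_1(t)+\chi_2(\norm{x}_X)+\chi_3(\norm{u}_\U)+c
\]
for all $(t,x,u)$. Enlarging $\chi_3$ (e.g.\ replacing it by $\chi_3(s)+s$) does not affect the estimate, so we may assume $\chi_3\in\KK_\infty$. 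We then pick $\eta\in\KK_\infty\cap\operatorname{Lip}_1$ with $\eta(r)\le \chi_3^{-1}(r/2)$ for all $r\ge0$. Such an $\eta$ exists by \cite[Lem.~A.18]{Mir23}, as in the proof of Proposition~\ref{prop:RFCcharacterization}. This choice gives $\chi_3(\eta(r))\le r/2$.

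Next, fix $x\in X$, $u\in\U_x^\eta$ and $t\ge0$, and set $M(t):=\sup_{s\in[0,t]}\norm{\phi(s,x,u)}_X$. This is finite because $s\mapsto\phi(s,x,u)$ is continuous on the compact interval $[0,t]$ by \ref{cond:continuityProperty}. Define the truncated input $\tilde u:=u\diamond_t 0$. It agrees with $u$ on $[0,t)$, and on $[0,t]$ up to a null set, which is all that matters for $\LL^\infty$ inputs. Since $u$ is trajectory-dominated, $\norm{\tilde u}_\U\le \esssup_{\theta\in[0,t]}\eta(\norm{\phi(\theta,x,u)}_X)\le \eta(M(t))$. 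By causality \ref{cond:causalityProperty}, $\phi(s,x,u)=\phi(s,x,\tilde u)$ for $s\le t$. Applying the BRS bound to $\tilde u$ and using monotonicity of $\chi_1$ then yields, for all $s\le t$,
\[
\norm{\phi(s,x,u)}_X\le \chi_1(t)+\chi_2(\norm{x}_X)+\chi_3(\eta(M(t)))+c \le \chi_1(t)+\chi_2(\norm{x}_X)+c+\tfrac12 M(t).
\]
Taking the supremum over $s\in[0,t]$ and using $M(t)<\infty$, we obtain
\[
M(t)\le 2\bigl(\chi_1(t)+\chi_2(\norm{x}_X)+c\bigr)=:\nu(\norm{x}_X,t).
\]
Here $\nu$ is continuous and component-wise increasing. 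Proposition~\ref{prop:RFCcharacterization} (\ref{cond:RFCcharacterization2}$\Rightarrow$\ref{cond:RFCcharacterization1}, with the same $\eta$) then gives robust forward completeness with respect to trajectory-dominated inputs with growth margin $\eta\in\operatorname{Lip}_1$. Alternatively, one can read this off directly from the definition by taking suprema over $\norm{x}_X<C$ and $t<\tau$.

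The main obstacle is the apparent circularity: the bound on the input depends on the trajectory, which depends on the input. This is resolved by two ingredients. First, truncating the input via causality makes its $\U$-norm controlled by the finite quantity $M(t)$. Second, the choice of $\eta$ makes $\chi_3\circ\eta\le\frac12\mathrm{id}$, so the term $\frac12 M(t)$ can be absorbed into the left-hand side. The finiteness of $M(t)$, which follows from continuity of trajectories, is essential for this absorption step.
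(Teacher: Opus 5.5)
Your proof is correct and follows essentially the same route as the paper. Both use Proposition~\ref{prop:BRScharacterization}\ref{cond:BRScharacterization3}, a small $\eta\in\KK_\infty\cap\operatorname{Lip}_1$ from \cite[Lem.~A.18]{Mir23}, truncation $u\diamond_t 0$ via causality, and absorption of the input term, which is possible because continuity makes the supremum over $[0,t]$ finite. The only difference is where you absorb: you impose $\chi_3\circ\eta\le\frac12\,\mathrm{id}$ and absorb in the state norm, getting $\nu(r,t)=2(\chi_1(t)+\chi_2(r)+c)$, whereas the paper takes $\eta\le\frac12\alpha^{-1}$ with $\alpha=4\max\{\mathrm{id},\chi_1,\chi_2,\chi_3\}$ and absorbs at the level of $\eta(\norm{\phi}_X)$, landing directly in the form $\norm{\phi(t,x,u)}_X\le\eta^{-1}(t+\norm{x}_X+c)$ of Proposition~\ref{prop:RFCcharacterization}\ref{cond:RFCcharacterization3} with $\kappa=\eta$.
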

\begin{proof}
    Let $\Sigma$ be BRS. Then, by Proposition \ref{prop:BRScharacterization} \ref{cond:BRScharacterization3}, 
    there exist $\chi_1, \chi_2, \chi_3 \in \KK$ and $c \geq 0$ such that for each $x \in X$, $u \in \U$, and $t \in \I$, it holds that
    \begin{align*}
        \norm{\phi(t,x,u)}_X &\leq \chi_1(t) + \chi_2\!\paren{\norm{x}_X} + \chi_3\!\paren{\norm{u}_\U} + c. 
    \end{align*}
    Let $\alpha \in \KK_\infty$ be such that 
    \begin{align*}
        \alpha(s) &= 4\max\argbraces{s, \chi_1(s), \chi_2(s), \chi_3(s)}, \qquad s \in \R_+,
    \end{align*}    
    and $\eta \in \KK_\infty\cap \operatorname{Lip}_1$ such that $\eta \leq \frac{1}{2}\alpha^{-1}$. Such $\eta$ exists by \cite[Lem. A. 18]{Mir23}.

    Then, it holds that
    \begin{align}
        &2\eta\!\paren{\norm{\phi(t,x,u)}_X} 
        \leq 2\eta\!\paren{\chi_1(t) + \chi_2\!\paren{\norm{x}_X} + \chi_3\!\paren{\norm{u}_\U} + c} \nonumber \\
        &\qquad\leq \alpha^{-1}\!\paren{\chi_1(t) + \chi_2\!\paren{\norm{x}_X} + \chi_3\!\paren{\norm{u}_\U} + c} 
        \nonumber \\
        &\qquad\leq \alpha^{-1}\!\paren{4\max\argbraces{\chi_1(t), \chi_2\!\paren{\norm{x}_X}\!, \chi_3\!\paren{\norm{u}_\U}\!, c}}
        \nonumber \\
        &\qquad\leq t + \norm{x}_X + \norm{u}_\U + c. \label{ineq:estimateTildeAlpha1ofphi}
    \end{align}
    Let $x \in X$, $u \in \U_x^\eta$ and $t \in \R_+$.
    Then, from \eqref{ineq:estimateTildeAlpha1ofphi} and due to causality,
    we obtain
    \begin{align*}
        &2\eta(\|\phi(t,x,u)\|_X) 
        = 
        2\eta\!\paren{\norm{\phi(t,x,u \diamond_t 0)}_X} \\
        &\qquad\leq t + \norm{x}_X + \norm{u \diamond_t 0}_\U + c \\ 
        &\qquad= t + \norm{x}_X + \esssup_{s \in [0,t]}\argbraces{\norm{u(s)}_U} + c \\ 
        &\qquad\leq t + \norm{x}_X + \esssup_{s \in [0,t]}\argbraces{\eta\!\paren{\norm{\phi(s,x,u)}_X}} + c 
    \end{align*}
    Taking the essential supremum over $[0,t]$ in the last inequality then results in
    \begin{align*}
        &2 \cdot \esssup_{s \in [0,t]}\argbraces{\eta\!\paren{\norm{\phi(s,x,u)}_X}} \\
        &\qquad\leq t + \norm{x}_X + \esssup_{s \in [0,t]}\argbraces{\eta\!\paren{\norm{\phi(s,x,u)}_X}} + c ,
    \end{align*}
    and using that $\phi(\ph,x,u)$ is continuous \ref{cond:continuityProperty}, we have that
    \begin{align*}
        \eta\!\paren{\norm{\phi(t,x,u)}_X} 
        &\leq \esssup_{s \in [0,t]}\argbraces{\eta\!\paren{\norm{\phi(s,x,u)}_X}} \\
        &\leq  t + \norm{x}_X + c.
    \end{align*}
    Hence, we obtain that
    \begin{align*}
        \norm{\phi(t,x,u)}_X
        &\leq  \eta^{-1}\!\paren{t + \norm{x}_X + c} < \infty
    \end{align*}
    for all $x \in X$, $u \in \U_x^\eta$ and $t \in \R_+$ and by Proposition~\ref{prop:RFCcharacterization}, $\Sigma$ is robustly forward complete with respect to trajectory-dominated inputs.
\end{proof}

\section{Lyapunov criterion}\label{sec:LyapunovBRS}
Let us recall the notion of \emph{Lipschitz continuity on compact intervals} as defined in \cite[Def. 3.22]{Mir24} as follows:

\begin{definition}\label{def:continuityOnCompactIntervals}
    We say that the \emph{flow of $\Sigma$ is Lipschitz continuous on compact intervals} if for each $\tau > 0$ and each $C > 0$, there exists $L > 0$ such that for any $x_1, x_2 \in B_C$, $u \in B_{C,\U}$ and $t < \tau$, it holds that
    \begin{align*}
        \norm{\phi(t,x_1,u) - \phi(t,x_2,u)}_X \leq L(\tau,C)\norm{x_1 - x_2}_X.
    \end{align*}
\end{definition}

We suggest the following related concept which determines that Lipschitz continuity should cover trajectories subject to \emph{different inputs}. 
\begin{definition}\label{def:continuityOnCompactIntervalswrtInputs}
    Let $\Sigma$ be a forward complete control system and $\eta \in \KK_\infty$. We say that the \emph{flow of $\Sigma$ is Lipschitz continuous on compact intervals with respect to trajectory-dominated inputs} if there exists $\eta \in \KK_\infty$ such that for each $\tau > 0$ and each $C > 0$, there exists $L = L(\tau,C) > 0$ such that for any $x_1, x_2 \in B_C$, and any $u_1 \in \U_{x_1}^\eta$, there exists $u_2 \in \U_{x_2}^\eta$, such that for all $t < \tau$, it holds that
    \begin{align}\label{ineq:LipschitzContinuityOnCompactIntervalswrtInputs}
        \norm{\phi(t,x_1,u_1) - \phi(t,x_2,u_2)}_X \leq L(\tau,C)\norm{x_1 - x_2}_X\!.
    \end{align}
\end{definition}

\begin{remark}\label{rem:relationOfLipschitzContinuityNotions}
    Obviously, Lipschitz continuity on compact intervals does not imply Lipschitz continuity on compact intervals with respect to trajectory-dominated inputs in the general case.
    
    The converse relation does not hold either, as we will show in Proposition \ref{prop:LipschitzonCIwrtTDIdoesNotImplyLipschitzonCI}.
\end{remark}

For a continuous function $V: X \to \R$, we define the \emph{Lie derivative} $\dot V_u(x)$ as the \emph{upper right-hand Dini derivative} at zero for the function $t \mapsto V\!\paren{\phi(t,x,u)}$ \cite[Def. 2.9]{Mir23}, i.e.,
\begin{align*}
    \dot V_u(x)
    &\coloneq \limsup_{s \to 0} \tfrac{1}{s}\paren{V\!\paren{\phi(s,x,u)} - V(x)}.
\end{align*}

Before we state our main result, we recall the following concept introduced in \cite[Proposition 4.4]{Mir23e}.
\begin{definition}\label{def:BRSLyapunovFunction}
    Let $\Sigma$ be a control system. We call a function $V\in \Cont\!\paren{X,\R_+}$ a \emph{BRS Lyapunov function} for system $\Sigma$ if it satisfies the following conditions:
	\begin{enumerate}
		\item\label{def:BRSLyapunovFunctionCondition1} There exist $\alpha_1, \alpha_2 \in \KK_\infty$ and $C \geq 0$ such that
		\begin{align*}
			\alpha_1\!\paren{\norm{x}_X} &\leq V(x) \leq \alpha_2\!\paren{\norm{x}_X} + C, \quad x \in X.
		\end{align*}
		\item\label{def:BRSLyapunovFunctionCondition2*} There exists $\chi \in \KK_\infty$ such that for all $x \in X$ and $u \in \U$, it holds that
        \begin{align*}			
            \chi\!\paren{\norm u_\U} &\leq
            \norm x_X 
            \qquad\implies
            \qquad \dot V_u(x) \leq V\!\paren{x}.
        \end{align*}
	\end{enumerate}
\end{definition}

In \cite[Proposition 4.4]{Mir23e} it was shown that the existence of a BRS Lyapunov function implies BRS. Our main contribution in this work is to show the converse implication for systems that are Lipschitz continuous on compact intervals with respect to trajectory-dominated inputs.
\begin{theorem}
\label{thm:BRSLyapunovFunction}
	Let $\Sigma$ be a control system, 
    and the flow of $\Sigma$ be Lipschitz continuous on compact intervals with respect to trajectory-dominated inputs.
    
    The following statements are equivalent:
    \begin{enumerate}
        \item\label{cond:BRSLyapunovFunction1} $\Sigma$ is BRS.
        \item\label{cond:BRSLyapunovFunction4} There is a (continuous) BRS Lyapunov function for $\Sigma$.
        \item\label{cond:BRSLyapunovFunction5} There exists a Lipschitz continuous on bounded sets BRS Lyapunov function for $\Sigma$.        
        \item\label{cond:BRSLyapunovFunction6} $\Sigma$ is robustly forward complete with respect to trajectory-dominated inputs.
    \end{enumerate}
\end{theorem}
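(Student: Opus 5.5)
The plan is to close the cycle \ref{cond:BRSLyapunovFunction1}$\implies$\ref{cond:BRSLyapunovFunction6}$\implies$\ref{cond:BRSLyapunovFunction5}$\implies$\ref{cond:BRSLyapunovFunction4}$\implies$\ref{cond:BRSLyapunovFunction1}. The implication \ref{cond:BRSLyapunovFunction1}$\implies$\ref{cond:BRSLyapunovFunction6} is Lemma~\ref{lem:BRStoRFC}. The implication \ref{cond:BRSLyapunovFunction5}$\implies$\ref{cond:BRSLyapunovFunction4} is trivial. The implication \ref{cond:BRSLyapunovFunction4}$\implies$\ref{cond:BRSLyapunovFunction1} is \cite[Proposition 4.4]{Mir23e}. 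If that result requires BIC, I would observe that forward completeness is part of our standing assumptions, so only the a priori bound is needed. This bound comes from the comparison argument $\dot V_u \le V$ whenever $\|x\|_X\ge\chi(\|u\|_\U)$, which gives $V(\phi(t,x,u))\le e^t\max\{V(x),\alpha_2(\chi(\|u\|_\U))+C\}$. All the real work is therefore in \ref{cond:BRSLyapunovFunction6}$\implies$\ref{cond:BRSLyapunovFunction5}.

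For this step I would first use Proposition~\ref{prop:RFCcharacterization}\ref{cond:RFCcharacterization3} to get $\kappa\in\KK_\infty\cap\operatorname{Lip}_1$ and $c\ge0$ with $\|\phi(t,x,u)\|_X\le\kappa^{-1}(t+\|x\|_X+c)$ for $u\in\U_x^\kappa$. Since $\U_x^{\eta'}\subset \U_x^{\eta}$ for $\eta'\le\eta$, I would shrink so that the same $\kappa$ also works as the margin in the Lipschitz hypothesis of Definition~\ref{def:continuityOnCompactIntervalswrtInputs}. This needs some care: when shrinking, the matched input $u_2$ must remain in the smaller class, so I would instead take $\kappa\le\eta$ at the outset and prove RFC for the margin $\min\{\kappa,\eta\}$ with the same bound. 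Then I would define
\begin{align*}
V(x)\coloneq \sup_{t\ge0}\ \sup_{u\in\U_x^\kappa} e^{-t}\,\kappa\!\paren{\|\phi(t,x,u)\|_X}.
\end{align*}
The bound gives $\kappa(\|x\|_X)\le V(x)\le \sup_t e^{-t}(t+\|x\|_X+c)\le \|x\|_X+c+1$, which is the sandwich with $\alpha_1=\kappa$, $\alpha_2=\mathrm{id}$ and $C=c+1$. For the decrease condition, take $\chi\coloneq\kappa^{-1}$. If $\|u\|_\U\le\kappa(\|x\|_X)$, then by continuity of the trajectory, for small $s$ one has $\|u(\theta)\|_U\le\kappa(\|\phi(\theta,x,u)\|_X)$ only approximately. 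So I would use the strict version $\chi(r)=\kappa^{-1}(2r)$ to get strict inequality, and hence $u$ truncated to $[0,s]$ lies in the trajectory-dominated class on $[0,s]$. Concatenating with any $v\in\U^\kappa_{\phi(s,x,u)}$ and applying the cocycle property then yields $e^{-s}V(\phi(s,x,u))\le V(x)$. This gives $\dot V_u(x)\le V(x)$ (in fact $\le V$ since $(e^s-1)/s\to1$).

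The main obstacle will be Lipschitz continuity of $V$ on bounded sets. For $\|x_i\|_X<C'$, the sup can be restricted to $t\le\tau(C')$: for large $t$, $e^{-t}(t+C'+c)$ falls below $\kappa(\|x_i\|_X)\le V(x_i)$ when $x_i$ is away from zero. Near zero I would handle this with a small additive $\varepsilon$ or by replacing $V$ with $\max\{V,\cdot\}$, a point that needs care. On $[0,\tau]$, given any $u_1\in\U_{x_1}^\kappa$, the hypothesis provides $u_2\in\U_{x_2}^\kappa$ with $\|\phi(t,x_1,u_1)-\phi(t,x_2,u_2)\|_X\le L(\tau,C')\|x_1-x_2\|_X$. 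Since $\kappa\in\operatorname{Lip}_1$, it follows that $V(x_1)\le V(x_2)+L\|x_1-x_2\|_X$, and symmetry gives the reverse inequality. Continuity of $V$ is included in this.
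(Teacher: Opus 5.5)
You have the same cycle and the same construction as the paper. Your decay argument via $\chi=\kappa^{-1}(2\,\cdot)$, concatenation and the cocycle property is correct, as is your Lipschitz argument by input matching on a compact time window.

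\textbf{Gap 1: the near-zero step.} The step you leave open is the crux, and as written it is a genuine gap. Lipschitz continuity on $B_R$ needs one time horizon valid for every $x\in B_R$. Your argument produces one only where $V$ is bounded below by a positive constant. Near the origin (e.g.\ if $V(0)=0$) the relevant times may drift to infinity while $L(\tau,R)$ grows with $\tau$.

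The paper handles this with the truncations $G_q(z)=\max\{0,z-\tfrac1q\}$ inside the supremum. Of your two remedies, adding $\varepsilon$ to $V$ changes nothing, but $W:=\max\{V,1\}$ works:
\begin{itemize}
\item Pick $T$ with $e^{-t}(t+R+c)\le 1$ for $t\ge T$. For every $x\in B_R$, all contributions from $t\ge T$ are then $\le 1$, so $W(x)$ depends only on $[0,T]$, and matching inputs gives $W(x_1)\le W(x_2)+L(T+1,R)\|x_1-x_2\|_X$, since $\max\{\cdot,1\}$ is $1$-Lipschitz.
\item $W(\phi(s,x,u))\le\max\{e^sV(x),1\}\le e^sW(x)$, which gives the decay condition.
\item $\kappa(\|x\|_X)\le W(x)\le \|x\|_X+c+1$.
\end{itemize}
Placing $\varepsilon$ inside the weighted supremum, $\sup e^{-t}(\kappa(\|\phi\|_X)+\varepsilon)$, also works, for the same reason. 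In the paper's notation your $W$ is $1+U_1$. Definition~\ref{def:BRSLyapunovFunction} allows the constant $C$ and does not require $V(0)=0$, so one truncation suffices. The paper's weighted series $1+\sum_q 2^{-q}U_q/(1+M(q,q))$ is more than this definition needs.

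\textbf{Gap 2: reconciling the margins.} Your fix points the wrong way. Suppose $\kappa\le\eta$ and $V$ is a supremum over $\U_x^\kappa$. The input $u_2\in\U_{x_2}^\eta$ supplied by Definition~\ref{def:continuityOnCompactIntervalswrtInputs} need not lie in $\U_{x_2}^\kappa$. It is then not a competitor for $V(x_2)$, and $V(x_1)\le V(x_2)+L\|x_1-x_2\|_X$ does not follow.

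The supremum must run over the Lipschitz class $\U_x^\eta$, and the growth bound must hold on that class, so you need $\eta\le\kappa$. This is what the paper assumes. You can then keep $\kappa\in\operatorname{Lip}_1$ as the weight inside the supremum and take $\chi=\eta^{-1}(2\,\cdot)$. This removes any need for $\eta\in\operatorname{Lip}_1$, which the paper also assumes.

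Note that $\eta\le\kappa$ is not a free normalization. The Lipschitz hypothesis is not inherited by smaller margins, and robust forward completeness is inherited only by smaller margins. So this is a compatibility condition between the two hypotheses. It holds in the paper's application Theorem~\ref{thm:sufficientConditionsForLCwrtTDI}, whose proof carries one $\eta$ through items (iii)--(v). The paper's ``we may assume $\eta\le\kappa$ since, otherwise, we may take the minimum of both'' passes over it just as your sketch does.
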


\begin{proof}
    We divide the main result in several sub-statements which were partially proven in Section \ref{sec:BRSandTDI}. The remaining implication will be proven in Lemma~\ref{lem:RFCtoBRSLF}. 
    
    The outline of the proof is depicted in Figure \ref{fig:BRSimplications}.
    The implication \ref{cond:BRSLyapunovFunction4}$\implies$\ref{cond:BRSLyapunovFunction1} is shown in \cite[Proposition 4.4]{Mir23e} and \ref{cond:BRSLyapunovFunction5}$\implies$\ref{cond:BRSLyapunovFunction4} is clear. 
    \begin{figure}[htbp]
    \centering
    \begin{tikzpicture}[node distance=2cm]
        \node(char1) at (0,0) {\ref{cond:BRSLyapunovFunction1}};
        \node[right=2.8cm of char1] (char4){\ref{cond:BRSLyapunovFunction4}};
        \node[right of = char4] (char5){\ref{cond:BRSLyapunovFunction5}};
        \node[right=2.0cm of char5] (char6){\ref{cond:BRSLyapunovFunction6}};

        \path
        (char6) edge[thick,double,double equal sign distance,-{Implies[]}] node[anchor=south]{\footnotesize Lemma \ref{lem:RFCtoBRSLF}} (char5)
        (char5) edge[thick,double,double equal sign distance,-{Implies[]}] node[anchor=south]{\footnotesize Clear} (char4)
        (char4) edge[thick,double,double equal sign distance,-{Implies[]}] node[anchor=south]{\footnotesize \cite[Proposition 4.4]{Mir23e}} (char1);
        \draw [thick,double,double equal sign distance,-{Implies[]},rounded corners=2mm] 
        (char1) |-(0,-1) -| node[pos=0.25,anchor=north]{\footnotesize Lemma \ref{lem:BRStoRFC}} (char6);
    \end{tikzpicture}
    \caption{Outline of the proof of Theorem \ref{thm:BRSLyapunovFunction}.}
    \label{fig:BRSimplications}
\end{figure}
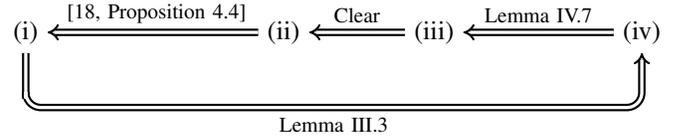
\end{proof}

In the following, we exploit the family of functions defined by
\begin{align*}
    G_k(z) \coloneq\max\argbraces{0,z-\tfrac{1}{k}},\quad k \in \N, \quad z \in \R_+.
\end{align*}
Note that for every $k \in \N$, we have $G_k \in \operatorname{Lip}_1$, i.e., 
\begin{align}
    \abs{G_k(z_1) - G_k(z_2)} \leq \abs{z_1 - z_2},\quad z_1,z_2\in\R_+, \label{ineq:estimateG_k1}
\end{align}
and for every $k \in \N$ and $a \geq 1$, it holds that
\begin{align}
    G_k(az) \leq aG_k(z) + \frac{a - 1}{k}. \label{ineq:estimateG_k2}
\end{align}
For the proof, see \cite[Lemma 3.2]{Mir23e}.

For our next result, we need the following technical lemma:
\begin{lemma}\label{lem:boundSupremumDifference}
    Let $J$ be an index set and $v,w\colon J \to \R$. Then, the inequality
    \begin{align*}
        \sup\nolimits_{s \in J}\braces{v(s) - w(s)}
        &\geq\sup\nolimits_{s \in J}\!\braces{v(s)} - \sup\nolimits_{s \in J}\!\braces{w(s)}
    \end{align*}
    holds.
\end{lemma}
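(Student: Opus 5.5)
The inequality follows from a pointwise decomposition and one supremum bound. Write $v(s) = (v(s)-w(s)) + w(s)$ for each $s \in J$, which gives
\begin{align*}
    v(s) \leq \sup\nolimits_{r \in J}\braces{v(r) - w(r)} + \sup\nolimits_{r \in J}\braces{w(r)}, \qquad s \in J.
\end{align*}
The right-hand side does not depend on $s$, so taking the supremum over $s \in J$ on the left yields
\begin{align*}
    \sup\nolimits_{s \in J}\braces{v(s)} \leq \sup\nolimits_{s \in J}\braces{v(s) - w(s)} + \sup\nolimits_{s \in J}\braces{w(s)}.
\end{align*}
Rearranging this gives the claim.

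The only real obstacle is how to read the statement when some suprema are infinite, since the difference on the right-hand side can then be an indeterminate form. I will adopt the convention that the inequality is only asserted when $\sup_{s\in J} w(s) < \infty$, or else treat $+\infty - (+\infty)$ as not arising. With that convention the cases go as follows:

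\begin{itemize}
    \item If $\sup_{s\in J} w(s) = +\infty$ and $\sup_{s\in J} v(s) < \infty$, the right-hand side equals $-\infty$ and there is nothing to prove.
    \item If $\sup_{s\in J} w(s) < \infty$, the rearrangement above is legitimate in the extended reals, including when $\sup_{s\in J} v(s) = +\infty$. In that case the displayed bound forces $\sup_{s\in J}\braces{v(s)-w(s)} = +\infty$, so the inequality still holds.
\end{itemize}

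In all uses later in the paper, the suprema are taken over bounded trajectory quantities on compact intervals and are therefore finite, so only the main case is needed. I will note this explicitly.
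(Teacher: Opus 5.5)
Your argument is correct, but it takes a different route from the paper. The paper picks a maximizing sequence $(s_n)_{n\in\mathbb{N}}$ with $v(s_n)\to\sup_{s\in J}v(s)$ and estimates
\[
\sup_{s\in J}(v(s)-w(s))\geq\limsup_{n\to\infty}(v(s_n)-w(s_n))=\sup_{s\in J}v(s)+\limsup_{n\to\infty}(-w(s_n))\geq\sup_{s\in J}v(s)-\sup_{s\in J}w(s).
\]
You instead bound each $v(s)$ by the $s$-independent quantity $\sup_{r\in J}(v(r)-w(r))+\sup_{r\in J}w(r)$ and take a single supremum.

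Your version needs neither a sequence nor limit arithmetic. In the paper, splitting the $\limsup$ is legitimate exactly when $\sup_{s\in J}v(s)<\infty$ or $\sup_{s\in J}w(s)<\infty$, i.e.\ when the right-hand side of the lemma is defined at all. The paper leaves this implicit, and your case distinction makes it explicit.

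Both proofs tacitly use $J\neq\emptyset$: the paper needs it to choose the sequence, and you need it to get $\sup_{s\in J}v(s)>-\infty$ in your first case. In the application in Step 1.4 this holds, since the index set of trajectory-dominated inputs contains the zero input. As you note, all suprema there are also finite, by the upper sandwich bound for $U_q$.
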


\begin{proof}
    There exists a sequence $(s_n)_{n \in \N} \in J^\N$ such that $\lim_{n \to \infty}\braces{v(s_n)} = \sup_{s \in J} \braces{v(s)}$ and, as a consequence,
    \begin{align*}
        &\sup\nolimits_{s \in J}\!\braces{v(s) - w(s)}
        \geq \limsup\nolimits_{n \to \infty}\!\braces{v(s_n) - w(s_n)} \\
        &\qquad=\sup\nolimits_{s \in J}\!\braces{v(s)} + \limsup\nolimits_{n \to \infty}\!\braces{-w(s_n)} \\
        &\qquad\geq\sup\nolimits_{s \in J}\!\braces{v(s)} - \sup\nolimits_{s \in J}\!\braces{w(s)},
    \end{align*}
    just as desired.
\end{proof}

We show that robust forward completeness with respect to trajectory-dominated inputs implies existence of a BRS Lyapunov function.
\begin{lemma}
\label{lem:RFCtoBRSLF}
    Let $\Sigma$ be a control system,
    and the flow of $\Sigma$ be Lipschitz continuous on compact intervals with respect to trajectory-dominated inputs with margin $\eta \in \KK_\infty$. If $\Sigma$ is robustly forward complete with respect to trajectory-dominated inputs, then there exists a Lipschitz continuous on bounded sets BRS Lyapunov function for $\Sigma$.
\end{lemma}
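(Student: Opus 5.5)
The plan is to build $V$ as a weighted sum of truncated ``worst-case exponentially discounted'' functionals along trajectory-dominated inputs. The $G_k$-truncation reduces the supremum over $t$ to a compact interval, on which Definition~\ref{def:continuityOnCompactIntervalswrtInputs} gives a Lipschitz estimate.

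\textbf{Common margin and a Lipschitz gauge.} First I fix the margin. I use the margin $\eta$ of the Lipschitz property, and by Proposition~\ref{prop:RFCcharacterization} \ref{cond:RFCcharacterization2} I take a continuous, component-wise increasing $\nu$ with $\norm{\phi(t,x,u)}_X\le \nu(\norm{x}_X,t)$ for all $u\in\U_x^\eta$. This presupposes that the robust forward completeness holds with this same $\eta$; I read the statement ``with margin $\eta$'' that way.

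\emph{This is the step I am least sure of.} The Lipschitz property does not pass to smaller margins, because the $u_2$ it produces lies only in $\U_{x_2}^\eta$. So if the two margins differed, I would need an extra argument, and I would try to avoid that by reading the hypotheses as sharing $\eta$.

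Exactly as in the proof of Proposition~\ref{prop:RFCcharacterization}, I then choose $\rho\in\KK_\infty\cap\operatorname{Lip}_1$ with $\rho\le \overline{\nu}^{-1}$. This gives $\rho(\norm{\phi(t,x,u)}_X)\le t+\norm{x}_X+c$ for $u\in\U_x^\eta$. For $k\in\N$ I define
\[
V_k(x):=\sup_{t\ge0,\ u\in\U_x^\eta} G_k\big(e^{-t}\rho(\norm{\phi(t,x,u)}_X)\big),
\qquad V:=\sum_{k\in\N} c_k V_k ,
\]
with positive weights $c_k$ fixed in the Lipschitz step below.

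\textbf{Sandwich bounds.} Taking $t=0$ and $u=0$ gives $V_k(x)\ge G_k(\rho(\norm{x}_X))$, hence $V(x)\ge\sum_k c_kG_k(\rho(\norm{x}_X))$. This lower bound is continuous, increasing, positive for $x\neq0$, and at least $c_1(\rho(\norm{x}_X)-1)$. It therefore dominates some $\alpha_1\in\KK_\infty$. For the upper bound, $V_k(x)\le\sup_t e^{-t}(t+\norm{x}_X+c)\le \norm{x}_X+c+1$. If $\sum c_k\le1$, this gives the upper estimate in Definition~\ref{def:BRSLyapunovFunction}~\ref{def:BRSLyapunovFunctionCondition1}.

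\textbf{Dissipation inequality.} I choose $\chi$ so that $\chi(\norm{u}_\U)\le\norm{x}_X$ implies $\norm{u}_\U\le\tfrac12\eta(\norm{x}_X)$, for instance $\chi(r):=\eta^{-1}(2r)$. If $x\neq0$, continuity~\ref{cond:continuityProperty} gives $h_0>0$ with $\eta(\norm{\phi(s,x,u)}_X)\ge\tfrac12\eta(\norm{x}_X)\ge\norm{u(s)}_U$ for $s\le h_0$. If $x=0$, then $u=0$, which is trajectory-dominated.

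Fix $h\le h_0$ and any $v\in\U^\eta_{\phi(h,x,u)}$. By the cocycle property~\ref{cond:cocycleProperty} and causality, $u\diamond_h v\in\U_x^\eta$, and $\phi(t,\phi(h,x,u),v)=\phi(t+h,x,u\diamond_h v)$. The factor $e^{-t}=e^{h}e^{-(t+h)}$, together with $G_k(az)\le aG_k(z)+\frac{a-1}{k}$ from \eqref{ineq:estimateG_k2}, yields
\[
V_k(\phi(h,x,u))\le e^hV_k(x)+\frac{e^h-1}{k}.
\]
The error term is $O(h/k)$, so summing over $k$ gives $\dot V_u(x)\le V(x)+\sum_k c_k/k$. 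This extra constant is a nuisance. I would remove it either by replacing $V$ with $V+\sum_kc_k/k$, which is allowed since the upper bound carries a constant $C$, or by using $e^{-2t}$ weights and absorbing the constant via the factor $2$ in $\dot V\le 2V$ after rescaling. I expect the first fix to work directly.

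\textbf{Lipschitz continuity on bounded sets (main obstacle).} Fix $C$. For $x\in B_C$ and $t\ge\tau_k(C)$, where $\tau_k(C)$ is chosen with $e^{-t}(t+C+c)<1/k$ for all such $t$, the integrand vanishes. So the supremum in $V_k$ runs only over $[0,\tau_k(C)]$.

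For $x_1,x_2\in B_C$ and $u_1\in\U_{x_1}^\eta$, Definition~\ref{def:continuityOnCompactIntervalswrtInputs} provides $u_2\in\U_{x_2}^\eta$ with $\norm{\phi(t,x_1,u_1)-\phi(t,x_2,u_2)}_X\le L(\tau_k(C),C)\norm{x_1-x_2}_X$ on $[0,\tau_k(C)]$. Since $G_k$ and $\rho$ are in $\operatorname{Lip}_1$, and using Lemma~\ref{lem:boundSupremumDifference} together with symmetry, $V_k$ is Lipschitz on $B_C$ with constant $L(\tau_k(C),C)$.

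The difficulty is that these constants may grow in $k$. I would handle this by choosing the weights adaptively, assuming without loss of generality that $L$ is increasing in both arguments:
\[
c_k:=\frac{2^{-k}}{1+L(\tau_k(k),k)} .
\]
Then for $k\ge C$ the term $c_kV_k$ has Lipschitz constant at most $2^{-k}$ on $B_C$, and the finitely many terms with $k<C$ contribute a finite constant. Hence $V$ is Lipschitz on $B_C$, and in particular continuous. This completes the construction of a Lipschitz continuous on bounded sets BRS Lyapunov function.
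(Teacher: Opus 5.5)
Your proposal is correct and is essentially the paper's proof. The paper also takes $V=1+\sum_q 2^{-q}U_q/(1+M(q,q))$, where $U_q$ is the $G_q$-truncated, $e^{-s}$-discounted supremum over $\U_x^\eta$. Like you, it truncates to a compact time interval, matches inputs via Definition~\ref{def:continuityOnCompactIntervalswrtInputs} and Lemma~\ref{lem:boundSupremumDifference} for the Lipschitz bound, uses concatenation with \eqref{ineq:estimateG_k2} for the dissipation estimate, and absorbs the $1/q$ errors into an additive constant.

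Your deviations are refinements, not gaps. Keeping a separate gauge $\rho\in\KK_\infty\cap\operatorname{Lip}_1$ inside $G_k$ avoids the paper's step ``we may assume $\eta\le\kappa$ and $\eta\in\operatorname{Lip}_1$''. As you rightly note, that step shrinks the margin of the Lipschitz hypothesis and is not justified; what is really needed is a common margin, or an RFC margin dominating $\eta$. Your estimate $\sup_{t\ge0}e^{-t}(t+a)\le a+1$ also gives a linear $\alpha_2$, so your weights need only involve $L$ rather than the paper's $M=\max\{L,\Theta\}$.
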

\begin{proof}
    Let $\Sigma$ be Lipschitz continuous on compact intervals with respect to trajectory-dominated inputs $\U_x^\eta$ and robustly forward complete with respect to trajectory-dominated inputs, where we assume that $\kappa \in \KK_\infty$ and $c > 0$ are as in Proposition~\ref{prop:RFCcharacterization} \ref{cond:RFCcharacterization3}. We may assume that $\eta \leq \kappa$ since, otherwise, we may take the minimum of both. Moreover, we may assume that $\eta\in \operatorname{Lip}_1$.
    
    \emph{Step 1.1: Defining pre-Lyapunov functions.}
    Let us design the building blocks for the BRS Lyapunov function that will be constructed later. For any $x \in X$ and $q \in \N$, we define 
    \begin{align*}
        &U_q(x) 
        \coloneq \sup_{v \in\U_x^\eta} \sup_{s\geq 0} G_q \big(e^{-s} \eta\!\paren{\norm{\phi(s,x,v)}_X}\big) \\
        &= \sup \!\Big\{\sup_{s\geq 0} \!\Big\{\max\!\Big\{0, e^{-s} \eta\!\paren{\norm{\phi(s,x,v)}_X} - \tfrac 1 q \Big\}\Big\} \\
        &\quad\,\Big|\, v \in \U \colon \text{for a.e. } \theta \in \R_+\colon \norm{v(\theta)}_U 
        \leq \eta\!\paren{\norm{\phi(\theta,x,v)}_X}\Big\}.
    \end{align*}
    
    We define for $R \geq 0$ and $q \in \N$
    \begin{align*}
        \Theta = \Theta(R,q) \coloneq \min\argbraces{t \in [1,\infty)\,|\, e^{-t}\paren{t + R + c} \leq \tfrac 1 q}.
    \end{align*}
    
    Note that the map $t \mapsto te^{-t}$ is strictly decreasing for $t \geq 1$. Therefore, $\Theta$ is continuous and increasing in the first component and for all $R\geq 0$, $q\in\N$ and $t \geq \Theta(R,q)$, it holds that $e^{-t}\paren{t + R + c} \leq \tfrac 1 q$.

    Then, for any given $R > 0$, and all $x \in \overline{B_R}$, $u \in \U_x^\eta$, and $t \geq \Theta(R,q)$, we have in view of \eqref{eq:RFC-kappa-characterization} 
    and since $\eta\leq \kappa$ that
    \begin{align}
        e^{-t}\eta\!\paren{\norm{\phi(t,x,u)}_X}
        &\leq e^{-t}\eta \circ \kappa^{-1}\paren{t + \norm{x}_X + c}\nonumber\\
        &\leq e^{-t}\paren{t + \norm{x}_X + c}
        \leq \tfrac 1 q. \label{ineq:UUOUqBoundedInTime}
    \end{align}

    By \eqref{ineq:UUOUqBoundedInTime}, for any $R > 0$ we can represent $U_q$ as
    \begin{align}
        U_q(x) 
        &= \sup_{v \in \U_x^\eta} \sup_{s \in [0,\Theta(R,q)]} \!G_q \big(e^{-s} \eta\!\paren{\norm{\phi(s,x,v)}_X}\big), \, x \in B_R.\label{eq:definitionU_qUUO}
    \end{align}
    Thus, it is sufficient to evaluate $e^{-s} \eta\!\paren{\norm{\phi(s,x,v)}_X}$ on the compact time interval $[0,\Theta(R,q)]$.
    
    \textit{Step 1.2: Sandwich bounds for $U_q$:} By substitution of $v = 0$, $s = 0$ into the definition of $U_q(x)$, we obtain the lower bound and by \eqref{eq:RFC-kappa-characterization} the upper bound
    \begin{align}
        G_q(\eta\!\paren{\norm{x}_X})
        \leq U_q(x) 
        \leq \Theta(\norm x_X\!,q) + \norm{x}_X + c. \label{ineq:sandwichboundsU_qUUO}
    \end{align}
    
    \textit{Step 1.3: Growth bound for $U_q$:} Next, we define $\chi \in \KK_\infty$ by $\chi(s) := \eta^{-1}(2s)$ for $s \in \R_+$. If the inequality
    \begin{align}
    \label{eq:Lyap-Premise-BRS}
        \chi\!\paren{\norm u_\U} &\leq
        \norm x_X
    \end{align}
    holds for some $x \in X$ and $u \in \U$, then it follows that
    \begin{align*}
        \norm{u}_\U
        \leq \tfrac{1}{2} \eta\!\paren{\norm{x}_X},
    \end{align*}
    and due to continuity of $\phi(\ph,x,u)$ \ref{cond:continuityProperty}, there exists $\tau =\tau(x,u) \in \R_+$ such that for all $t \in [0,\tau)$, we have
    \begin{align}
        \norm{u}_\U
        \leq  \eta\!\paren{\norm{\phi(t,x,u)}_X}. \label{ineq:preliminarybounduyandx}
    \end{align}

    Hence, for $x \in X$ and $u \in \U$ satisfying \eqref{eq:Lyap-Premise-BRS} for all $t \in [0,\tau(x,u))$, by definition of $U_q(x)$, we have
    \begin{align*}
        &U_q(\phi(t,x,u)) \\
        &= \sup_{v \in\U_{\phi(t,x,u)}^\eta} \sup_{s\geq 0} G_q \big(e^{-s} \eta\!\paren{\norm{\phi(s,\phi(t,x,u),v)}_X}\big) \\
        &= \sup\! \Big\{\sup_{s\geq 0} \argbraces{G_q\argparen{e^{-s} \eta\!\paren{\norm{\phi(s,\phi(t,x,u),v)}_X}}} \,\Big|\, v \in \U\colon \\
        &\quad \text{for a.e. } \theta \in \R_+\colon \norm{v(\theta)}_U
        \leq \eta\!\paren{\norm{\phi(\theta,\phi(t,x,u),v)}_X}\Big\}.
    \end{align*}
    By the cocycle property \ref{cond:cocycleProperty}, and since $(u \diamond_t v)(t + \theta) = v(\theta)$ for all $\theta\ge 0$, we proceed to
    \begin{align*}
        &U_q(\phi(t,x,u)) \\
        &\qquad= \sup\!\Big\{\sup_{s\geq 0} \argbraces{G_q\argparen{e^te^{-t -s} \eta\!\paren{\norm{\phi(t + s,x, u \diamond_t v)}_X}}} \\
        &\qquad\quad\,\Big|\, v \in \U\colon \text{for a.e. } \theta \in \R_+\colon \\
        &\qquad\quad\norm{(u \diamond_t v)(t + \theta)}_U
        \leq \eta\!\paren{\norm{\phi(t + \theta,x,u \diamond_t v)}_X}\Big\}
    \end{align*}
    
    We rewrite this by substituting $\theta^* \coloneq t + \theta$:
    \begin{align*}        
        &U_q(\phi(t,x,u)) \\
        &\qquad= \sup\!\Big\{\sup_{s\geq 0} \argbraces{G_q\argparen{e^te^{-t -s} \eta\!\paren{\norm{\phi(t + s,x, u \diamond_t v)}_X}}} \\
        &\qquad\quad\,\Big|\, v \in \U\colon \text{for a.e. } \theta^* \in [t,\infty)\colon  \\
        &\qquad\quad\norm{(u \diamond_t v)(\theta^*)}_U
        \leq \eta\!\paren{\norm{\phi(\theta^*,x,u \diamond_t v)}_X}\Big\}.
    \end{align*}
    Finally, in view of \eqref{ineq:preliminarybounduyandx}, and since $t \in [0,\tau(x,u))$, we see that
    \begin{align*}        
        &U_q(\phi(t,x,u)) \\
        &\qquad= \sup\!\Big\{\sup_{s\geq 0} \argbraces{G_q\argparen{e^te^{-(t+s)} \eta\!\paren{\norm{\phi(t + s,x, u \diamond_t v)}_X}}} \\
        &\qquad\quad\,\Big|\, v \in \U\colon \text{for a.e. } \theta^* \in \R_+\colon  \\
        &\qquad\quad\norm{(u \diamond_t v)(\theta^*)}_U
        \leq \eta\!\paren{\norm{\phi(\theta^*,x,u \diamond_t v)}_X}\Big\}.
    \end{align*} 
    
    We substitute $s^* \coloneq t + s$, and $v^* \coloneq u\diamond_t v$. The supremum cannot become smaller by allowing a larger set of times $s^*\geq 0$ instead of $s^* \geq t$ and a larger set of inputs $v^* \in \U$ instead of $v^* = u\diamond_t v$, $v \in\Uc$, i.e.,
    \begin{align*}
        &U_q(\phi(t,x,u)) \\
        &\leq \sup\!\Big\{\sup_{s^*\geq 0} \argbraces{G_q\argparen{e^te^{-s^*} \eta\!\paren{\norm{\phi(s^*,x, v^*)}_X}}} \\
        &\quad\,\Big|\, v^* \in \U\colon \text{for a.e. } \theta^* \in \R_+\colon  \\
        &\quad\norm{v^*(\theta^*)}_U
        \leq \eta\!\paren{\norm{\phi(\theta^*,x,v^*)}_X}\Big\} \\
        &= \sup_{v^* \in \U_x^\eta} \sup_{s^*\geq 0} \argbraces{G_q\argparen{e^te^{-s^*} \eta\!\paren{\norm{\phi(s^*,x, v^*)}_X}}} \\
        &\leq e^t \sup_{v^* \in \U_x^\eta} \sup_{s^*\geq 0} \argbraces{G_q\argparen{e^{-s^*} \eta\!\paren{\norm{\phi(s^*,x, v^*)}_X}}} + \tfrac{e^t - 1}{q} \\
        &= e^t U_q(x) + \tfrac{e^t - 1}{q}, \qquad \forall q \in \N, 
    \end{align*}
    where we used \eqref{ineq:estimateG_k2} in the second last step.
    
    Then, for all $x \in X$ and $u \in \U$ satisfying \eqref{eq:Lyap-Premise-BRS} for all $t \in [0,\tau(x,u))$, we have
    \begin{align}
        &\tfrac{\diff}{\diff t} (U_q)_u(x)
        = \lim_{t \searrow 0} \tfrac{U_q(\phi(t,x,u)) - U_q(x)}{t} \nonumber\\
        &\leq \lim_{t \searrow 0} \frac{1}{t}\paren{e^t U_q(x) + \tfrac{e^t - 1}{q} - U_q(x)} = U_q(x) + \tfrac{1}{q}. \label{ineq:boundDerivativeU_q}
    \end{align}
    
    \textit{Step 1.4: Lipschitz continuity on bounded balls of $U_q$:}
    Pick any $q \in \N$, $R>0$ and let $x_1, x_2 \in B_R$. Using \eqref{eq:definitionU_qUUO}, we have
    \begin{align}
        &\abs{U_q(x_1) - U_q(x_2)} \nonumber\\
        &= \max\argbraces{U_q(x_1) - U_q(x_2), U_q(x_2) - U_q(x_1)} \nonumber\\
        &= \max\!\bigg\{\sup_{u_1 \in \U_{x_1}^\eta} \sup_{s \in [0,\Theta(R,q)]} G_q \big(e^{-s} \eta\!\paren{\norm{\phi(s,x_1,u_1)}_X}\big) \nonumber\\
        &\quad - \sup_{u_2 \in \U_{x_2}^\eta} \sup_{s \in [0,\Theta(R,q)]} G_q \big(e^{-s} \eta\!\paren{\norm{\phi(s,x_2,u_2)}_X}\big),\nonumber\\
        &\quad\sup_{u_2 \in \U_{x_2}^\eta} \sup_{s \in [0,\Theta(R,q)]} G_q \big(e^{-s} \eta\!\paren{\norm{\phi(s,x_2,u_2)}_X}\big) \nonumber \\
        &\quad- \sup_{u_1 \in \U_{x_1}^\eta} \sup_{s \in [0,\Theta(R,q)]} G_q \big(e^{-s} \eta\!\paren{\norm{\phi(s,x_1,u_1)}_X}\big)\bigg\}. \label{eq:estimateLipschitzContinuityBRS}
    \end{align}
    Since the flow of $\Sigma$ is Lipschitz continuous on compact intervals with respect to trajectory-dominated inputs, there is a map $k_{x_1,x_2}\colon \U_{x_1}^\eta \to \U_{x_2}^\eta$ which maps each $u_1 \in \U_{x_1}$ to $u_2 \in \U_{x_2}$ such that
    \begin{align*}
        \norm{\phi(t,x_1,u_1) - \phi(t,x_2,u_2)}_Y \leq L(\Theta(R,q),R)\norm{x_1 - x_2}_X.
    \end{align*}
    holds for all $t \leq \Theta(R,q)$ (if there are multiple such $u_2$, the image may be selected arbitrarily).
    
    From the definition of $k_{x_1,x_2}$, Lemma \ref{lem:boundSupremumDifference} and \eqref{ineq:estimateG_k1}, it follows that
    \begin{align*}
        &\sup_{u_1 \in \U_{x_1}^\eta} \sup_{s \in [0,\Theta(R,q)]} G_q \big(e^{-s} \eta\!\paren{\norm{\phi(s,x_1,u_1)}_X}\big)  \\
        &\qquad\quad- \sup_{u_2 \in \U_{x_2}^\eta} \sup_{s \in [0,\Theta(R,q)]} G_q \big(e^{-s} \eta\!\paren{\norm{\phi(s,x_2,u_2)}_X}\big) \nonumber\\
        &\qquad\leq \sup_{u_1 \in \U_{x_1}^\eta} \bigg\{\sup_{s \in [0,\Theta(R,q)]} G_q \big(e^{-s} \eta\!\paren{\norm{\phi(s,x_1,u_1)}_X}\big)  \\
        &\qquad\quad- \sup_{s \in [0,\Theta(R,q)]} G_q \big(e^{-s} \eta\!\paren{\norm{\phi(s,x_2,k_{x_1,x_2}(u_1))}_X}\big)\bigg\} \nonumber\\
        &\qquad\leq \sup_{u_1 \in \U_{x_1}^\eta} \sup_{s \in [0,\Theta(R,q)]} \big|G_q \big(e^{-s} \eta\!\paren{\norm{\phi(s,x_1,u_1)}_X}\big) \\
        &\qquad\qquad\qquad\qquad - G_q \big(e^{-s} \eta\!\paren{\norm{\phi(s,x_2,k_{x_1,x_2}(u_1))}_X}\big)\big| \nonumber \\
        &\qquad\leq \sup_{u_1 \in \U_{x_1}^\eta} \sup_{s \in [0,\Theta(R,q)]} \big|e^{-s} \big(\eta\!\paren{\norm{\phi(s,x_1,u_1)}_X}  \\
        &\qquad\qquad \qquad\qquad -  \eta\!\paren{\norm{\phi(s,x_2,k_{x_1,x_2}(u_1))}_X}\big)\big|. \nonumber
    \end{align*}
    Furthermore, employing that $\eta \in \operatorname{Lip}_1$, the reverse triangle inequality, and the assumption that the flow of $\Sigma$ is Lipschitz continuous on compact intervals with respect to trajectory-dominated inputs, we achieve 
    \begin{align}
        &\sup_{u_1 \in \U_{x_1}^\eta} \sup_{s \in [0,\Theta(R,q)]} G_q \big(e^{-s} \eta\!\paren{\norm{\phi(s,x_1,u_1)}_X}\big) \nonumber\\
        &\quad- \sup_{u_2 \in \U_{x_2}^\eta} \sup_{s \in [0,\Theta(R,q)]} G_q \big(e^{-s} \eta\!\paren{\norm{\phi(s,x_2,u_2)}_X}\big) \nonumber\\
        &\leq \sup_{u_1 \in \U_{x_1}^\eta} \sup_{s \in [0,\Theta(R,q)]} e^{-s} \big|\norm{\phi(s,x_1,u_1)}_X \nonumber\\
        &\quad- \norm{\phi(s,x_2,k_{x_1,x_2}(u_1))}_X\big| \nonumber \\
        &\leq \sup_{u_1 \in \U_{x_1}^\eta} \sup_{s \in [0,\Theta(R,q)]} \!\!\!\!\!\!e^{-s} \!\norm{\phi(s,x_1,u_1) - \phi(s,x_2,k_{x_1,x_2}(u_1))}_X\nonumber \\
        &\leq \sup_{u_1 \in \U_{x_1}^\eta} \sup_{s \in [0,\Theta(R,q)]} e^{-s} L(\Theta(R,q),R) \norm{x_1 - x_2}_X\nonumber \\
        &\leq L(\Theta(R,q),R) \norm{x_1 - x_2}_X. \label{ineq:continuityUUOU_q(x)}
    \end{align}
    From \eqref{eq:estimateLipschitzContinuityBRS}, \eqref{ineq:continuityUUOU_q(x)} and by exchanging the roles of $x_1$ and $x_2$, it follows for all $x_1,x_2 \in B_R$ that
    \begin{align}
        &\abs{U_q(x_1) - U_q(x_2)} \nonumber\\
        &= \max\!\bigg\{\sup_{u_1 \in \U_{x_1}^\eta} \sup_{s \in [0,\Theta(R,q)]} G_q \big(e^{-s} \eta\!\paren{\norm{\phi(s,x_1,u_1)}_X}\big) \nonumber\\
        &\quad- \sup_{s \in [0,\Theta(R,q)]} G_q \big(e^{-s} \eta\!\paren{\norm{\phi(s,x_2,k_{x_1,x_2}(u_1))}_X}\big),\nonumber\\
        &\qquad\sup_{u_2 \in \U_{x_2}^\eta} \sup_{s \in [0,\Theta(R,q)]} G_q \big(e^{-s} \eta\!\paren{\norm{\phi(s,x_2,u_2)}_X}\big) \nonumber\\
        &\quad- \sup_{s \in [0,\Theta(R,q)]} G_q \big(e^{-s} \eta\!\paren{\norm{\phi(s,x_1,k_{x_2,x_1}(u_2))}_X}\big)\bigg\} \nonumber\\
        &\leq L(\Theta(R,q),R) \norm{x_1 - x_2}_X, \label{ineq:propertyG2UUO} 
    \end{align}
    i.e., $U_q$ is Lipschitz continuous on bounded balls.

    \textit{Step 2: Construction of $V$:} Define $M\colon  \R_+ \times \N \to \R_+$ by
    \[
    M(R,q) := \max\argbraces{L(\Theta(R,q),R), \Theta(R,q)}.
    \]
    
    By the definition of $L$ and $\Theta$, $M$ satisfies
    \begin{align}
        q \geq R &\implies M(q,q) \geq M(R,q) \label{ineq:propertyG1UUO},
    \end{align}
    for all $q \in \N$ and $R \in \R_+$.

    We define a BRS Lyapunov function candidate by
    \begin{align}
    \label{eq:BRS-candidate}
        V(x) \coloneq 1 + \sum_{q = 1}^\infty \frac{2^{-q} U_q(x)}{1 + M(q,q)}, \quad x \in X.
    \end{align}
    Let us check that $V$ is indeed a Lipschitz continuous BRS Lyapunov function.

    \textit{Sandwich bounds for $V$:}
    We employ \eqref{ineq:sandwichboundsU_qUUO} to define $\alpha_1,\alpha_2 \in \KK_\infty$ and $C \geq 0$ such that
    \begin{align}
        \alpha_1\!\paren{\norm{x}_X} 
        \leq V(x) 
        \leq \alpha_2\!\paren{\norm x_X} + C, \label{ineq:sandwichBoundVUUO}
    \end{align}
    is true. Indeed, we set
    \begin{align*}
        \alpha_1(s) &\coloneq \sum_{q = 1}^\infty 2^{-q} \frac{G_q(\eta(s))}{1 + M(q,q)},\quad s \geq 0.
    \end{align*}
    Clearly, $\alpha_1(0)=0$, $\alpha_1$ is increasing and as $ \eta \in\Kinf$, so is $\alpha_1$. 
    For the upper bound, we define $\alpha_2\in \KK_\infty$ such that 
    \begin{align*}
        \alpha_2(s) \geq  s + \sum_{q = 1}^{\lfloor s \rfloor} 2^{-q} \tfrac{\Theta\paren{s,q}}{1 + M(q,q)},
    \end{align*}
    with the convention that the sum over an empty index set equals zero and $\lfloor\ph\rfloor$ is the floor function.
    
    Furthermore, we set $C \coloneq 2 + c$.
    Then, by \eqref{ineq:sandwichboundsU_qUUO}, we have
    \begin{align*}
        V(x) &= 1 + \sum_{q = 1}^\infty \tfrac{2^{-q} U_q(x)}{1 + M(q,q)} \\
        &\leq 1 + \sum_{q = 1}^\infty 2^{-q} \tfrac{\Theta(\norm x_X,q) + \norm{x}_X + c}{1 + M(q,q)} \\
        &= 1 + \sum_{q = 1}^{\lfloor \norm{x}_X \rfloor} 2^{-q} \tfrac{\Theta\paren{\norm{x}_X,q}}{1 + M(q,q)} 
        + \sum_{q = \lfloor \norm{x}_X \rfloor + 1}^\infty \!\!2^{-q} \tfrac{\Theta\paren{\norm{x}_X,q}}{1 + M(q,q)} \\
        &\quad+ \sum_{q = 1}^\infty 2^{-q} \tfrac{\norm{x}_X + c}{1 + M(q,q)}\\
        &\leq \alpha_2\argparen{\norm{x}_X} + C,
    \end{align*}
    where we used in the last step that the second sum is bounded by 
    \begin{align*}
        \sum_{q = \lfloor\norm{x}_X \rfloor + 1}^\infty 2^{-q} \tfrac{\Theta\paren{\norm{x}_X,q}}{1 + M(q,q)}
        &\leq \sum_{q = \lfloor\norm{x}_X \rfloor + 1}^\infty 2^{-q} \tfrac{\Theta\paren{q,q}}{1 + M(q,q)} \\
        &\leq \sum_{q = \lfloor\norm{x}_X \rfloor + 1}^\infty 2^{-q} \tfrac{M(q,q)}{1 + M(q,q)} 
        \leq 1,
    \end{align*}
    where we made use of \eqref{ineq:propertyG1UUO} and the definition of $M$,
    and the third sum is smaller than a geometric series and bounded by $\norm{x}_X + c$. Hence, \eqref{ineq:sandwichBoundVUUO} is true.
    
    \textit{Growth bound for $V$:} Moreover, by \eqref{ineq:boundDerivativeU_q}, whenever $\chi(\|u\|_{\U}) \leq \|x\|_X$, we have the following bound
    \begin{align*}
        \dot V_u(x) 
        \leq \sum_{q = 1}^\infty 2^{-q}\tfrac{U_q(x) + \tfrac{1}{q}}{1 + M(q,q)} 
        \leq 1 + \sum_{q = 1}^\infty 2^{-q}\tfrac{U_q(x)}{1 + M(q,q)} 
        = V(x)
    \end{align*}
    i.e., the implication in Definition \ref{def:BRSLyapunovFunction} \ref{def:BRSLyapunovFunctionCondition2*} holds. 
    
    \textit{Lipschitz continuity of $V$:} From the definition of $V$, \eqref{ineq:propertyG2UUO} and \eqref{ineq:propertyG1UUO}, we get for any $R>0$ and any $x_1,x_2 \in B_R$ the Lipschitz bound
    \begin{align*}
        &\abs{V(x_1) - V(x_2)}
        \leq \sum_{q = 1}^\infty 2^{-q} \frac{\abs{U_q(x_1) - U_q(x_2)}}{1 + M(q,q)} \\
        &\qquad\leq \norm{x_1 - x_2}_X \paren{\sum_{q = 1}^\infty 2^{-q} \frac{M(R,q)}{1 + M(q,q)}}\\
        &\qquad\leq \norm{x_1 - x_2}_X \paren{1 + \sum_{q = 1}^{\lceil R \rceil} 2^{-q} \frac{M(R,q)}{1 + M(q,q)}}\!,
    \end{align*}
    where $\lceil \ph \rceil$ is the ceiling function, that is, $V$ is Lipschitz continuous on bounded sets.
\end{proof}

Lemma \ref{lem:RFCtoBRSLF} closes the loop and accomplishes the proof of Theorem \ref{thm:BRSLyapunovFunction}.

For convenience, we state a counterpart to \cite[Corollary 2.12]{AnS99}.
\begin{corollary}\label{cor:variationBRSLyapunovFunction}
    Let $\Sigma$ be a control system,
    and the flow of $\Sigma$ be Lipschitz continuous on compact intervals with respect to trajectory-dominated inputs. $\Sigma$ is BRS if and only if there exists a function $W\in \Cont\!\paren{X,\R_+}$ satisfying the following conditions:
	\begin{enumerate}
		\item\label{def:variationBRSLyapunovFunctionCondition1} There exist $\alpha_1, \alpha_2 \in \KK_\infty$ and $C \geq 0$ such that
		\begin{align*}
			\alpha_1\!\paren{\norm{x}_X} &\leq W(x) \leq \alpha_2\!\paren{\norm{x}_X} + C, \quad x \in X.
		\end{align*}
		\item\label{def:variationBRSLyapunovFunctionCondition2*} There exists $\chi \in \KK_\infty$ such that for all $x \in X$ and $u \in \U$, it holds that
        \begin{align*}			
            \chi\!\paren{\norm u_\U} &\leq
            \norm x_X 
            \quad\implies
            \quad \dot W_u(x) \leq 1.
        \end{align*}
	\end{enumerate}
\end{corollary}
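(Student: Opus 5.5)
The plan is to pass between $V$ from Theorem~\ref{thm:BRSLyapunovFunction} and $W$ by a logarithmic change of variables, in both directions.

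\emph{Necessity.} Assume $\Sigma$ is BRS. By Theorem~\ref{thm:BRSLyapunovFunction} there is a BRS Lyapunov function $V$ with constants $\alpha_1,\alpha_2,C,\chi$. The construction in Lemma~\ref{lem:RFCtoBRSLF} gives $V\geq 1$; in general one may replace $V$ by $V+1$, which keeps both conditions since $\dot V_u(x)\le V(x)\le V(x)+1$. Set $W:=\ln V$. Then $W\in\Cont(X,\R_+)$.

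\emph{Dini derivative of $W$.} Since $\ln$ is $C^1$ on $[1,\infty)$ with derivative at most $1/V(x)$ near $V(x)$, and $t\mapsto V(\phi(t,x,u))$ is continuous, the chain rule for upper Dini derivatives with a monotone $C^1$ outer function gives
\[
\dot W_u(x)=\dot V_u(x)/V(x)\le 1
\]
whenever $\chi(\norm u_\U)\le\norm x_X$.

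\emph{Sandwich bounds for $W$.} The upper bound is immediate: $W(x)\le \ln(1+\alpha_2(\norm x_X)+C)\le \alpha_2(\norm x_X)+C$, using $\ln(1+y)\le y$. The lower bound $\ln V(x)\ge\ln(1+\alpha_1(\norm x_X))$ requires $V\ge 1+\alpha_1$. This is the main technical point, because $\ln V\ge \ln\alpha_1$ alone is not a $\KK_\infty$ lower bound near $0$. To handle it, replace $V$ by $\widetilde V:=V+1$, which satisfies $\widetilde V\ge 1+\alpha_1(\norm x_X)$ and still has $\dot{\widetilde V}_u\le V\le\widetilde V$. With $W:=\ln\widetilde V$ we get $W\ge \ln(1+\alpha_1(\norm x_X))$, and $\ln(1+\alpha_1(\cdot))\in\KK_\infty$.

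\emph{Sufficiency.} Given $W$, define $V:=e^{W}$. Then $\dot V_u(x)=e^{W(x)}\dot W_u(x)\le V(x)$ under the same premise $\chi(\norm u_\U)\le\norm x_X$, again by the Dini chain rule, now with increasing $C^1$ outer function $\exp$. The lower bound is $V\ge e^{\alpha_1(\norm x_X)}\ge 1+\alpha_1(\norm x_X)\ge\alpha_1(\norm x_X)$. The upper bound is $V\le e^{C}e^{\alpha_2(\norm x_X)}\le \tilde\alpha_2(\norm x_X)+e^C$ with $\tilde\alpha_2(s):=e^C(e^{\alpha_2(s)}-1)\in\KK_\infty$. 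Hence $V$ is a BRS Lyapunov function, and \cite[Proposition 4.4]{Mir23e} (equivalently Theorem~\ref{thm:BRSLyapunovFunction}, item \ref{cond:BRSLyapunovFunction4}$\Rightarrow$\ref{cond:BRSLyapunovFunction1}) gives BRS. This direction does not need the Lipschitz assumption.

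\emph{Remaining check.} The only step to verify carefully is the Dini chain rule: for $g$ increasing and $C^1$ and $h$ continuous, $\limsup \frac{g(h(s))-g(h(0))}{s}=g'(h(0))\limsup\frac{h(s)-h(0)}{s}$. This follows from the mean value theorem, $g(h(s))-g(h(0))=g'(\xi_s)(h(s)-h(0))$ with $\xi_s\to h(0)$ and $g'(h(0))>0$. The case where the limsup equals $\pm\infty$ is handled the same way.
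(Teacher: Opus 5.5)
Your proof is correct and follows the paper's route. The paper also takes $W:=\ln(1+V)$ with $V$ from Theorem~\ref{thm:BRSLyapunovFunction} and applies the chain rule for Dini derivatives; your converse via $V:=e^{W}$ and your mean-value-theorem proof of that chain rule just make explicit what the paper leaves implicit. One small caveat: saying sufficiency needs no Lipschitz assumption is slightly too strong, because \cite[Proposition 4.4]{Mir23e} requires BIC/forward completeness, which here comes from the hypothesis on trajectory-dominated inputs (that definition presupposes a forward complete system).
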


\begin{proof}
    The result is achieved by Theorem \ref{thm:BRSLyapunovFunction}, the choice $W \coloneq \ln(1 + V)$ and the chain rule for the Dini derivative \cite[Lem. A.30]{Mir23}.
\end{proof}

\section{
Semi-linear evolution equations}\label{sec:BRSEvolutionEquations}

In the previous section, we showed that for systems, which are Lipschitz continuous on compact intervals with respect to trajectory-dominated inputs, the BRS property is equivalent to the existence of a BRS Lyapunov function and robust forward completeness with respect to trajectory-dominated inputs. However, it is not a priori clear, how restrictive the assumption of Lipschitz continuity on compact intervals with respect to trajectory-dominated inputs is. Therefore, we show that this condition is fulfilled for the following broad class of semi-linear evolution equations $\PDESigma$ which are BRS.

To this aim, we define \emph{semi-linear evolution equations} by
\begin{align*}
    \PDESigma\colon \dot x(t) &= Ax(t) + f\!\paren{x(t),u(t)}, \quad t \in \I,
\end{align*}
where $u \in \U$, the closed linear operator $A: D(A) \to X$ is the infinitesimal generator of a $C_0$-semigroup denoted by $(T(t))_{t \geq 0}$, the domain of definition $D(A)$ of an operator $A$ is a dense subset of $X$, and $f: X \times U \rightarrow X$. 
We suppose that for each $x_0 \in X$ and $u \in \U \coloneq \LL^\infty(\I, U)$, a unique mild solution of $\PDESigma$ exists, i.e., a function $x \in \Cont([0,T],X)$ satisfying for some $T>0$ and all $t \in [0,T]$ the integral equation
\begin{align*}
    x(t) &=
    T(t)x_0 + \int_0^t T(t - s) f\!\paren{x(s),u(s)} \diff s.
\end{align*}
We denote this solution by $\phi = \phi(t,x_0,u)$.

A sufficient condition for uniqueness of mild solutions is Lipschitz continuity on bounded sets of $f$ \cite[Thm. 3.7]{Mir24}.

\subsection{Sufficient conditions for Lipschitz continuity on compact intervals with respect to trajectory-dominated inputs}
The following theorem provides sufficient conditions for Lipschitz continuity of the flow on compact intervals with respect to trajectory-dominated inputs. In particular, we prove that evolution equations with locally Lipschitz continuous $f$ are sufficiently regular for Theorem \ref{thm:BRSLyapunovFunction} to hold. 

Let $\eta \in\KK_\infty$, $d \in \D = \overline{B_{1,\U}}$ and $D \coloneq \overline{B_{1,U}}$. Consider the auxiliary closed-loop system
\begin{align}\label{eq:closedLoopSystem}
   \hspace{-2mm} \Sigma_\eta\colon \dot x(t) &= Ax(t) + f\!\paren{x(t),d(t) \eta(\|x(t)\|_X)},\ \ t \geq 0.
\end{align}
where $A$ and $f$ are as for $\PDESigma$. 

Next, we give sufficient conditions such that the solutions of $\PDESigma$ and $\Sigma_\eta$ are well-defined and have Lipschitz continuous flow.
\begin{theorem}
\label{thm:sufficientConditionsForLCwrtTDI}
Let $\PDESigma$ be BRS. Consider the following statements:
\begin{enumerate}
    \item\label{cond:sufficientConditionsForLCwrtTDI1}
    For each $C > 0$, there exists $L_C > 0$ such that for all $x_1,x_2 \in B_C$, $u_1,u_2 \in B_{C,U}$, it holds that
    \begin{align*}
        \|f(x_1,u_1) &- f(x_2,u_2)\|_X\\
        &\leq L_C \paren{\norm{x_1 - x_2}_X + \norm{u_1 - u_2}_U}\!.
    \end{align*}
    \item\label{cond:sufficientConditionsForLCwrtTDI2}
    For every $\eta \in \KK_\infty \cap \operatorname{Lip}_1$, the map $(x,d) \mapsto f(x,d\eta(\|x\|))$ is Lipschitz continuous on bounded balls, uniformly in $d$, namely: For each $C > 0$, there is $L_C > 0$ so that for all $x_1,x_2 \in B_C$, $d \in D$, we have
    \begin{align}
        \|f\argparen{x_1,d\eta\argparen{\norm{x_1}_X}} - f&\argparen{x_2,d\eta\argparen{\norm{x_2}_X}}\|_X \nonumber\\
                &\leq L_C \paren{\norm{x_1 - x_2}_X}\!. \label{ineq:LipschitzConditionOnf}
    \end{align}
    \item\label{cond:sufficientConditionsForLCwrtTDI3}
    There exists $\eta \in \KK_\infty \cap \operatorname{Lip}_1$, such that $\PDESigma$ is robustly forward complete with respect to trajectory-dominated inputs with growth margin $\eta$ and for each $C > 0$, there is $L_C > 0$ so that for all $x_1,x_2 \in B_C$, $d \in D$,
    \eqref{ineq:LipschitzConditionOnf} holds.
    \item\label{cond:sufficientConditionsForLCwrtTDI4} There exists $\eta \in \KK_\infty \cap \operatorname{Lip}_1$ such that the flow of the auxiliary closed loop system $\Sigma_\eta$
    is Lipschitz continuous on compact intervals uniformly in $d \in \D$.
    \item\label{cond:sufficientConditionsForLCwrtTDI5} $\PDESigma$ has a flow that is Lipschitz continuous on compact intervals with respect to trajectory-dominated inputs. 
\end{enumerate}
Then \ref{cond:sufficientConditionsForLCwrtTDI1}$ \implies $\ref{cond:sufficientConditionsForLCwrtTDI2}$ \implies $\ref{cond:sufficientConditionsForLCwrtTDI3}$ \implies $\ref{cond:sufficientConditionsForLCwrtTDI4}$ \implies $\ref{cond:sufficientConditionsForLCwrtTDI5}.    
\end{theorem}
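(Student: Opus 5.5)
The plan is to prove the four implications one at a time, in the order listed. The second and fourth are short reductions. The real work is in \ref{cond:sufficientConditionsForLCwrtTDI3}$\implies$\ref{cond:sufficientConditionsForLCwrtTDI4}, where well-posedness of $\Sigma_\eta$ and a Gronwall estimate are needed.

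\emph{\ref{cond:sufficientConditionsForLCwrtTDI1}$\implies$\ref{cond:sufficientConditionsForLCwrtTDI2}:} Fix $\eta\in\KK_\infty\cap\operatorname{Lip}_1$. Since $\eta(0)=0$, we have $\eta(r)\le r$. Hence for $x_i\in B_C$ and $d\in D$ we get $\|d\,\eta(\|x_i\|_X)\|_U<C$, so both arguments lie in the ball where \ref{cond:sufficientConditionsForLCwrtTDI1} applies. Using $\|d\|_U\le 1$, the unit Lipschitz constant of $\eta$ and the reverse triangle inequality gives
\[
\|f(x_1,d\eta(\|x_1\|_X))-f(x_2,d\eta(\|x_2\|_X))\|_X\le L_C\big(\|x_1-x_2\|_X+|\eta(\|x_1\|_X)-\eta(\|x_2\|_X)|\big)\le 2L_C\|x_1-x_2\|_X .
\]

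\emph{\ref{cond:sufficientConditionsForLCwrtTDI2}$\implies$\ref{cond:sufficientConditionsForLCwrtTDI3}:} Since $\PDESigma$ is BRS, Lemma~\ref{lem:BRStoRFC} yields a growth margin $\eta\in\KK_\infty\cap\operatorname{Lip}_1$ for robust forward completeness with respect to trajectory-dominated inputs. Applying \ref{cond:sufficientConditionsForLCwrtTDI2} to this particular $\eta$ gives \eqref{ineq:LipschitzConditionOnf}.

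\emph{\ref{cond:sufficientConditionsForLCwrtTDI3}$\implies$\ref{cond:sufficientConditionsForLCwrtTDI4}:} I expect this step to be the main obstacle. Fix $d\in\D$ and set $F_d(t,x):=f(x,d(t)\eta(\|x\|_X))$.
\begin{enumerate}
\item By \eqref{ineq:LipschitzConditionOnf}, $F_d$ is Lipschitz on bounded balls uniformly in $t$. Measurability in $t$ along continuous curves is inherited from the standing well-posedness assumptions on $f$. The standard contraction argument for mild solutions then gives a unique local mild solution $x(\cdot)$ of $\Sigma_\eta$, together with a blow-up alternative.
\item Any such solution is a mild solution of $\PDESigma$ with input $u(t):=d(t)\eta(\|x(t)\|_X)$. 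This input is essentially bounded on compact intervals, and after extending it by zero it lies in $\U_{x_0}^\eta$.
\item Robust forward completeness with respect to trajectory-dominated inputs, via Proposition~\ref{prop:RFCcharacterization}, gives the a priori bound $\|x(t)\|_X\le\nu(\|x_0\|_X,t)$. This excludes blow-up, so solutions are global. It also gives a constant $K=\nu(C,\tau)$ bounding all solutions with $x_0\in B_C$ on $[0,\tau]$, uniformly in $d$.
\item Let $\|T(t)\|\le Me^{\omega t}$. For two solutions $x_1(\cdot),x_2(\cdot)$ driven by the same $d$, the mild formula and \eqref{ineq:LipschitzConditionOnf} with constant $L_K$ give
\[
\|x_1(t)-x_2(t)\|_X\le Me^{|\omega|\tau}\|x_1(0)-x_2(0)\|_X+Me^{|\omega|\tau}L_K\int_0^t\|x_1(s)-x_2(s)\|_X\,\diff s .
\]
Gronwall's inequality then yields the Lipschitz constant $L=Me^{|\omega|\tau}\exp(Me^{|\omega|\tau}L_K\tau)$, which is independent of $d\in\D$.
\end{enumerate}

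\emph{\ref{cond:sufficientConditionsForLCwrtTDI4}$\implies$\ref{cond:sufficientConditionsForLCwrtTDI5}:} Use the same $\eta$. Given $x_1,x_2\in B_C$ and $u_1\in\U_{x_1}^\eta$, write $y_1=\phi(\cdot,x_1,u_1)$ and define
\[
d(t):=\frac{u_1(t)}{\eta(\|y_1(t)\|_X)}\ \text{ if } y_1(t)\neq0,\qquad d(t):=0\ \text{ otherwise}.
\]
This $d$ is measurable and satisfies $\|d\|_\U\le1$. Moreover, $u_1(t)=d(t)\eta(\|y_1(t)\|_X)$ a.e.: where $y_1(t)=0$, the domination condition forces $u_1(t)=0$. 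Hence $y_1$ is the solution of $\Sigma_\eta$ with data $(x_1,d)$. Now let $y_2$ be the solution of $\Sigma_\eta$ with data $(x_2,d)$, and put $u_2(t):=d(t)\eta(\|y_2(t)\|_X)$. By uniqueness of mild solutions of $\PDESigma$ we get $y_2=\phi(\cdot,x_2,u_2)$, so $u_2\in\U_{x_2}^\eta$. The estimate \eqref{ineq:LipschitzContinuityOnCompactIntervalswrtInputs} then follows directly from \ref{cond:sufficientConditionsForLCwrtTDI4}.
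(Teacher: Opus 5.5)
Your four implications follow the paper's route: the same $2L_C$ estimate for (i)$\implies$(ii), Lemma~\ref{lem:BRStoRFC} for (ii)$\implies$(iii), the a priori bound via $\nu$ from Proposition~\ref{prop:RFCcharacterization} plus Gr\"onwall for (iii)$\implies$(iv), and the representation $u_1=d\,\eta(\|\phi(\cdot,x_1,u_1)\|_X)$ with $u_2=d\,\eta(\|\phi_\CL(\cdot,x_2,d)\|_X)$ for (iv)$\implies$(v). Your (iii)$\implies$(iv) is more careful than the paper's. You extend the closed-loop input by zero before claiming membership in $\U_{x_0}^\eta$, and you invoke the blow-up alternative explicitly.

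One step in (iv)$\implies$(v) fails as written. You define $u_2(t)=d(t)\eta(\|y_2(t)\|_X)$ for all $t\ge 0$ and conclude $u_2\in\U_{x_2}^\eta$. But $y_2$ may be unbounded on $\R_+$. Then $u_2\notin\U=\LL^\infty(\R_+,U)$, so neither $\phi(\cdot,x_2,u_2)$ nor the membership makes sense. For example, take $\dot x=|x|$, $\eta(s)=s/2$, $x_1=-1$, $x_2=1$ and $u_1(t)=e^{-t}/2\in\U_{x_1}^\eta$. Your construction gives $d\equiv 1$ and $u_2(t)=e^{t}/2$.

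The fix is the one the paper uses, and the same zero-extension you already applied in (iii)$\implies$(iv). In Definition~\ref{def:continuityOnCompactIntervalswrtInputs}, $u_2$ may depend on $\tau$. So set $u_2:=d\,\eta(\|y_2\|_X)$ on $[0,\tau]$ and $u_2:=0$ afterwards. Then:
\begin{itemize}
\item $u_2\in\U$;
\item uniqueness of mild solutions of $\PDESigma$ gives $\phi(t,x_2,u_2)=y_2(t)$ for $t\le\tau$;
\item the domination inequality holds on $[0,\tau]$ by construction and trivially after $\tau$;
\item the Lipschitz estimate from (iv) then applies on $[0,\tau)$.
\end{itemize}
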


\begin{remark}\label{rem:LipschitzConditionOnf}
    The Lipschitz condition on the regularity of $f$ in Statement \ref{cond:sufficientConditionsForLCwrtTDI1} is stronger than the one required for well-posedness, see, e.g., \cite[Thm. 3.20]{Mir24}. This stronger condition is imposed to achieve the regularity of the BRS Lyapunov function $V$, and it is natural in the context of converse Lyapunov theorems \cite[Section 2.6]{Mir23}, see also \cite[Section 2.12]{Mir23} for the discussion of regularity requirements for the right-hand side for various types of results in the ISS theory.
\end{remark}

\begin{proof}
    \emph{\ref{cond:sufficientConditionsForLCwrtTDI1}$ \implies $\ref{cond:sufficientConditionsForLCwrtTDI2}}: Let $\eta \in \KK_\infty \cap \operatorname{Lip}_1$. For each $C > 0$, each $d \in D$ and $x_1,x_2 \in B_C$, it holds that $d \eta(\norm{x_1}_X), d \eta(\norm{x_2}_X) \in B_{C,U}$ due to global Lipschitz continuity with Lipschitz constant 1 of $\eta$, i.e. 
    \[
    \eta(s) = \abs{\eta(s) - \eta(0)} \leq \abs{s - 0} = s,\quad s \in \R_+.
    \]
    Then, by Statement \ref{cond:sufficientConditionsForLCwrtTDI1} with corresponding $L_C$, the fact that $\eta \in \operatorname{Lip}_1$ and the reverse triangle inequality, we have that
    \begin{align*}
        &\|{f\argparen{x_1,d\eta\argparen{\norm{x_1}_X}} - f\argparen{x_2,d\eta\argparen{\norm{x_2}_X}}}\|_X \\
        &\qquad\leq L_C\paren{\norm{x_1 - x_2}_X + \norm{d\eta\argparen{\norm{x_1}_X} - d\eta\argparen{\norm{x_2}_X}}_U} \\
        &\qquad= L_C\paren{\norm{x_1 - x_2}_X + \norm{d}_U\abs{\eta\argparen{\norm{x_1}_X} - \eta\argparen{\norm{x_2}_X}}} \\
        &\qquad\leq L_C\paren{\norm{x_1 - x_2}_X + \abs{\norm{x_1}_X - \norm{x_2}_X}} \\
        &\qquad\leq 2L_C\norm{x_1 - x_2}_X.
    \end{align*}
    Hence, Statement \ref{cond:sufficientConditionsForLCwrtTDI2} is satisfied.
    
    \emph{\ref{cond:sufficientConditionsForLCwrtTDI2}$ \implies $\ref{cond:sufficientConditionsForLCwrtTDI3}}: Let $\PDESigma$ be BRS. By Lemma \ref{lem:BRStoRFC}, there exists a growth margin $\eta$ such that $\PDESigma$ is robustly forward complete with respect to trajectory-dominated inputs in $\U_x^\eta$. Without loss of generality, $\eta$ can be chosen to be globally Lipschitz continuous with Lipschitz constant 1, i.e., $\eta \in \operatorname{Lip}_1$.
    
    \emph{\ref{cond:sufficientConditionsForLCwrtTDI3}$ \implies $\ref{cond:sufficientConditionsForLCwrtTDI4}}:
    Let $\eta \in \KK_\infty$ be the function from Statement \ref{cond:sufficientConditionsForLCwrtTDI3}. 
    
    Then, by Statement \ref{cond:sufficientConditionsForLCwrtTDI3}, the map $x \mapsto f\argparen{x,d\eta\argparen{\norm{x}_X}}$ is Lipschitz continuous on bounded sets uniformly in $d \in D$. 
    
    Hence, an initial value problem for \eqref{eq:closedLoopSystem}, for any initial value $x \in X$ and any disturbance input $d \in \D$, has a unique mild solution that we denote by $\phi_\CL(\ph,x,d)$.
    
    Let $R > 0$ be arbitrary, $x_i \in B_R$, $d \in \D$, and $\phi_i(t)\coloneq\phi_\CL(t,x_i,d)$, $i \in \{1,2\}$. 
    Obviously, the function $t \mapsto d(t)\eta\argparen{\norm{\phi_i(t)}_X}$ is in $\U_{x_i}^\eta$. Due to robust forward completeness with respect to trajectory-dominated inputs with a growth margin $\eta$, there exists a function $\nu$ as defined in Proposition~\ref{prop:RFCcharacterization}. 
    
    Hence, at time $s\le t \in \R_+$, the trajectory $\phi_i$ may be bounded by 
    \begin{align*}
        \norm{\phi_i(s)}_X
        \leq \nu(R,s)
        \eqcolon C_1(s) \le C_1(t).
    \end{align*}
    Moreover, we note that
    \begin{align*}
        &\Big\| d(t)\eta\!\paren{\norm{\phi_1(t)}_X} - d(t)\eta\!\paren{\norm{\phi_2(t)}_X}\Big\|_U \\
        &\qquad= \norm{d(t)}_U\abs{\eta\!\paren{\norm{\phi_1(t)}_X} - \eta\!\paren{\norm{\phi_2(t)}_X}} \\
        &\qquad\leq \abs{\eta\!\paren{\norm{\phi_1(t)}_X} - \eta\!\paren{\norm{\phi_2(t)}_X}} \\
        &\qquad\leq \abs{\norm{\phi_1(t)}_X - \norm{\phi_2(t)}_X} \\
        &\qquad\leq \norm{\phi_1(t) - \phi_2(t)}_X,
    \end{align*}
    for almost every $t \in \R_+$, where we used that $\eta$ has the Lipschitz constant 1 and the reverse triangle inequality. Since $T$ defines a $\Cont_0$-semigroup, there exist $M> 0$ and $\lambda \ge 0$ such that $\norm{T(t)} \leq Me^{\lambda t}$ \cite[Thm. 2.2]{Paz83}, where $\norm{\ph}$ is the operator norm on $X$. 
    Then, it holds that
    \begin{align*}
        &\norm{\phi_1(t) - \phi_2(t)}_X \\
        &= \Bigg\|T(t)x_1 + \int_0^t T(t - s) f\!\paren{\phi_1(s),d(s)\eta\!\paren{\norm{\phi_1(s)}_X}} \diff s \\
        &\quad- T(t)x_2 - \int_0^t T(t - s) f\!\paren{\phi_2(s),d(s)\eta\!\paren{\norm{\phi_2(s)}_X}} \diff s \Bigg\|\nonumber\\
        &\leq \norm{T(t)}\norm{x_1 - x_2}_X \nonumber\\
        &+ \int_0^t \norm{T(t - s)}\|f\!\paren{\phi_1(s),d(s)\eta\!\paren{\norm{\phi_1(s)}_X}} \\
        &\qquad- f\!\paren{\phi_2(s),d(s)\eta\!\paren{\norm{\phi_2(s)}_X}}\|_X \diff s \nonumber\\
        &\leq Me^{\lambda t}\norm{x_1 - x_2} + \int_0^t Me^{\lambda (t - s)} \\
        &\quad \times L_{C_1(t)}\!\paren{\norm{\phi_1(s) - \phi_2(s)}_X\! + \norm{\phi_1(s) - \phi_2(s)}_X} \!\diff s \nonumber\\
        &\leq Me^{\lambda t}\norm{x_1 - x_2} \\
        &\quad+ \int_0^t 2Me^{\lambda (t - s)} L_{C_1(t)}\norm{\phi_1(s) - \phi_2(s)}_X  \diff s        .
    \end{align*}
    Above we use that $C_1$ is increasing and without loss of generality we assume that the Lipschitz constant $L$ is increasing w.r.t. the radius of the ball. By the substitution $z_i(t) = e^{-\lambda t}\phi_i(t)$ and Grönwall's inequality, it follows that
    \begin{align*}
        &\norm{\phi(t,x_1,u_1) - \phi(t,x_2,u_2)}_X
        = \norm{\phi_1(t) - \phi_2(t)} \\
        &\qquad= e^{\lambda t}\norm{z_1(t) - z_2(t)}
        \leq Me^{(2M L_{C_1(t)} + \lambda)t}\norm{x_1 - x_2}.
    \end{align*}
    Hence, $\Sigma_\eta$ as defined in \eqref{eq:closedLoopSystem} is Lipschitz continuous on compact intervals uniformly in $d \in D$.
    
    \emph{\ref{cond:sufficientConditionsForLCwrtTDI4}$ \implies $\ref{cond:sufficientConditionsForLCwrtTDI5}}: Let $\eta\in \KK_\infty \cap \operatorname{Lip}_1$. 
    Pick any $R > 0$ and $\tau > 0$ and let $x_1, x_2 \in B_R$. We want to match each $u_1 \in \U_{x_1}^\eta$ with such a $u_2 \in \U_{x_2}^\eta$ that \eqref{ineq:LipschitzContinuityOnCompactIntervalswrtInputs} is satisfied for all $t \in [0,\tau)$. 
    Pick any $u_1 \in \U_{x_1}^\eta$. As $\PDESigma$ is well-posed, there is a unique solution of this system with the initial condition $x_1$ and the input $u_1$, namely $\phi(\cdot,x_1,u_1)$. 
    As $u_1 \in \U_{x_1}^\eta$, there exists $d \in \D$ such that 
    \begin{align}\label{eq:implicitDefinitiond}
        u_1(t) = d(t)\eta\!\paren{\norm{\phi(t,x_1,u_1)}_X} \text{ for a.e. } t \ge 0.    
    \end{align}
    We have
    \begin{align*}
        &\phi(t,x_1,u_1) 
        = T(t)x_1 + \int_0^t T(t - s) f\!\paren{\phi(t,x_1,u_1),u_1(s)} \diff s\\
        &= T(t)x_1 \\
        &\quad+ \int_0^t T(t - s) f\!\paren{\phi(t,x_1,u_1),d(t)\eta\!\paren{\norm{\phi(t,x_1,u_1)}_X}} \diff s.
    \end{align*}
    This shows that $t\mapsto \phi(t,x_1,u_1)$ is also a mild solution to $\Sigma_\eta$ as defined in \eqref{eq:closedLoopSystem} with initial condition $x(0)=x_1$ and the above $d$, i.e., $\phi(t,x_1,u_1) = \phi_{\CL}(t,x_1,d)$, $t\geq 0$.
    
    Hence, the solution $\phi_\CL(\ph,x_2,d)$ to $\Sigma_\eta$ as in \eqref{eq:closedLoopSystem} coincide with $\phi(\ph,x_2,u_2)$, for some $u_2$ given by 
    \begin{align}\label{eq:definitionOfu_2}
        u_2(t) = d(t)\eta\!\paren{\norm{\phi_\CL(t,x_2,d)}_X} 
        = d(t)\eta\!\paren{\norm{\phi(t,x_2,u_2)}_X}
    \end{align}
    for a.e. $t \in [0,\tau)$. 
    In particular, $u_2$ can be arbitrarily extended such that $u_2 \in \U_{x_2}^\eta$. 
    
    Hence, by the assumption that $\Sigma_\eta$ as defined in \eqref{eq:closedLoopSystem} is Lipschitz continuous on compact intervals uniformly in $d \in D$, the flow of $\PDESigma$ is Lipschitz continuous on compact intervals with respect to trajectory-dominated inputs, i.e., for any $u_1 \in \U_{x_1}^\eta$, there exists $u_2 \in \U_{x_2}^\eta$ defined by \eqref{eq:definitionOfu_2} where $d$ is implicitly given in \eqref{eq:implicitDefinitiond}, such that for all $t < \tau$, it holds that
    \begin{align*}
        &\norm{\phi(t,x_1,u_1) - \phi(t,x_2,u_2)}_X \nonumber\\
        &= \norm{\phi_\CL(t,x_2,d) - \phi_\CL(t,x_2,d)}_X 
        \leq L(\tau,R)\norm{x_1 - x_2}_X\!,
    \end{align*}
    where $L(\tau,R)$ is the Lipschitz constant of the flow of $\Sigma_\eta$.
\end{proof}

\begin{remark}
    Note that in the proof of Theorem \ref{thm:sufficientConditionsForLCwrtTDI}, we implicitly used the following fact: By \eqref{eq:implicitDefinitiond}, for each $x \in X$ and $u \in \U_x^\eta$, there exists a $d \in \D$ such that
    \begin{align*}
        \phi(\ph,x,u) = \phi_\CL(\ph,x,d).
    \end{align*}
    
    Conversely, by \eqref{eq:definitionOfu_2}, for each $x \in X$, $d \in \D$ and $\tau \in \R_+$, there exists $u \in \U_x^\eta$ such that for all $t \leq \tau$
    \begin{align*}
        \phi(t,x,u) = \phi_\CL(t,x,d).
    \end{align*}
    This direction holds only for finite times since $\eta\!\paren{\norm{\phi(t,x,d)}_X}$ might be unbounded, i.e., $d(t)\eta\!\paren{\norm{\phi(t,x_2,u_2)}_X} \notin \U$.

    In total, this means that there is a bijection between inputs $u$ to system $\PDESigma$ and $d$ to system $\Sigma_\CL$ generating the same trajectories on finite intervals, given that the regularity of these systems is high enough.
\end{remark}

\begin{remark}
    We note that $u_2$ as defined in \eqref{eq:definitionOfu_2} is not a unique choice to achieve Lipschitz continuity on compact intervals with respect to trajectory-dominated outputs. In fact, $\widetilde u_2$ implicitly defined by
    \begin{align*}
        \widetilde u_2(s) = \tfrac{u_1(s)}{\norm{u_1(s)}_U}\min\argbraces{\norm{u_1(s)}_U\!, \eta\!\paren{\norm{\phi(s,x_2,u_2)}_X}}
    \end{align*}
    is another choice. However, $u_2$ as employed in our proof is the canonic choice arising from the noisy state feedback approach in \cite{AnS99} and in the similar approaches \cite{LSW96,MiW17c,BAB24,ASW00}.
\end{remark}

\begin{corollary}\label{cor:RFCofCLSystem}
    We consider $\PDESigma$ where Statement \ref{cond:sufficientConditionsForLCwrtTDI3} of Theorem \ref{thm:sufficientConditionsForLCwrtTDI} is satisfied. Then, the following are equivalent:
    \begin{enumerate}
        \item\label{cond:RFCofCLSystem1} There exists $\eta \in \KK_\infty$ such that $\Sigma_\eta$ as defined in \eqref{eq:closedLoopSystem} is robustly forward complete with respect to inputs in $\D$. 
        \item\label{cond:RFCofCLSystem2} $\PDESigma$ is robustly forward complete with respect to trajectory-dominated inputs. 
        \item\label{cond:RFCofCLSystem3} $\PDESigma$ is BRS. 
    \end{enumerate}
\end{corollary}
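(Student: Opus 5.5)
The plan is to prove \ref{cond:RFCofCLSystem2}$\iff$\ref{cond:RFCofCLSystem3} using the results of Sections~\ref{sec:BRSandTDI} and~\ref{sec:LyapunovBRS}. The equivalence \ref{cond:RFCofCLSystem1}$\iff$\ref{cond:RFCofCLSystem2} will come from the correspondence between trajectory-dominated inputs of $\PDESigma$ and disturbances $d \in \D$ of $\Sigma_\eta$, already used in the proof of Theorem~\ref{thm:sufficientConditionsForLCwrtTDI}. Throughout, $\eta \in \KK_\infty\cap\operatorname{Lip}_1$ denotes the margin from Statement~\ref{cond:sufficientConditionsForLCwrtTDI3}.

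\emph{\ref{cond:RFCofCLSystem3}$\implies$\ref{cond:RFCofCLSystem2}} is exactly Lemma~\ref{lem:BRStoRFC}.

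\emph{\ref{cond:RFCofCLSystem2}$\implies$\ref{cond:RFCofCLSystem3}.} I would first obtain Lipschitz continuity of the flow with respect to trajectory-dominated inputs, and then invoke Theorem~\ref{thm:BRSLyapunovFunction}.
\begin{enumerate}
\item The implications \ref{cond:sufficientConditionsForLCwrtTDI3}$\implies$\ref{cond:sufficientConditionsForLCwrtTDI4}$\implies$\ref{cond:sufficientConditionsForLCwrtTDI5} of Theorem~\ref{thm:sufficientConditionsForLCwrtTDI} never use the standing BRS assumption. They only use robust forward completeness with respect to trajectory-dominated inputs, in the form of the bound $\nu$ from Proposition~\ref{prop:RFCcharacterization}, together with \eqref{ineq:LipschitzConditionOnf}.
\item Statement~\ref{cond:sufficientConditionsForLCwrtTDI3} is assumed, so these implications show that the flow of $\PDESigma$ is Lipschitz continuous on compact intervals with respect to trajectory-dominated inputs.
\item Theorem~\ref{thm:BRSLyapunovFunction} then gives \ref{cond:BRSLyapunovFunction6}$\implies$\ref{cond:BRSLyapunovFunction1}, i.e.\ BRS. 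The Lipschitz margin and the RFC margin may differ, but Lemma~\ref{lem:RFCtoBRSLF} already allows passing to their minimum.
\end{enumerate}
The point requiring care here is to reread that chain of implications and confirm that BRS is not used implicitly, for instance to guarantee global existence of $\phi_\CL$.

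\emph{\ref{cond:RFCofCLSystem2}$\implies$\ref{cond:RFCofCLSystem1}}, taking the same $\eta$.
\begin{enumerate}
\item The map $x\mapsto f(x,d\eta(\norm{x}_X))$ is Lipschitz on bounded sets uniformly in $d \in D$. Hence, for each $x\in X$ and $d\in\D$, $\Sigma_\eta$ has a unique local mild solution $\phi_\CL(\cdot,x,d)$ on a maximal interval $[0,t^*)$.
\item For $\tau<t^*$, set $u \coloneq \big(d(\cdot)\eta(\norm{\phi_\CL(\cdot,x,d)}_X)\big)\diamond_\tau 0$. This $u$ is bounded, since $\phi_\CL$ is continuous on $[0,\tau]$, so $u\in\U$.
\item By uniqueness of mild solutions of $\PDESigma$, $\phi(\cdot,x,u)=\phi_\CL(\cdot,x,d)$ on $[0,\tau]$. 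After $\tau$ the input $u$ is zero, so $u\in\U_x^\eta$.
\item Consequently $\norm{\phi_\CL(t,x,d)}_X\le \nu(\norm{x}_X,t)$ for all $t<t^*$.
\item The standard boundedness-implies-continuation property of semilinear equations with nonlinearities that are Lipschitz on bounded sets then gives $t^*=\infty$. The same bound is exactly robust forward completeness of $\Sigma_\eta$ with respect to inputs in $\D$.
\end{enumerate}

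\emph{\ref{cond:RFCofCLSystem1}$\implies$\ref{cond:RFCofCLSystem2}}, for a given $\eta$.
\begin{enumerate}
\item Take $u\in\U_x^\eta$ and define $d(t)\coloneq u(t)/\eta(\norm{\phi(t,x,u)}_X)$ where the denominator is positive, and $d(t) \coloneq 0$ otherwise. Then $d\in\D$, and \eqref{eq:implicitDefinitiond} holds.
\item Hence $\phi(\cdot,x,u)$ is a mild solution of $\Sigma_\eta$ with disturbance $d$. It therefore equals $\phi_\CL(\cdot,x,d)$, using uniqueness of the closed-loop trajectories; this is part of $\Sigma_\eta$ being a well-defined system, and is automatic when $\eta$ is the Lipschitz margin of Statement~\ref{cond:sufficientConditionsForLCwrtTDI3}.
\item Taking suprema over $\norm{x}_X<C$, $u\in\U_x^\eta$ and $t<\tau$ is therefore dominated by the RFC supremum of $\Sigma_\eta$ over $d\in\D$, which gives \ref{cond:RFCofCLSystem2}.
\end{enumerate}

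I expect the main obstacle to be the global existence of $\phi_\CL$ in \ref{cond:RFCofCLSystem2}$\implies$\ref{cond:RFCofCLSystem1}. This requires the truncation-and-concatenation argument above, so that the input stays in $\U$ even though $\eta(\norm{\phi_\CL}_X)$ may be unbounded on $\R_+$.
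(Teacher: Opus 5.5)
Your proposal is correct and follows the same route as the paper. Items \ref{cond:RFCofCLSystem2}$\iff$\ref{cond:RFCofCLSystem3} come from the chain \ref{cond:sufficientConditionsForLCwrtTDI3}$\implies$\ref{cond:sufficientConditionsForLCwrtTDI4}$\implies$\ref{cond:sufficientConditionsForLCwrtTDI5} of Theorem~\ref{thm:sufficientConditionsForLCwrtTDI} combined with Theorem~\ref{thm:BRSLyapunovFunction}, and items \ref{cond:RFCofCLSystem1}$\iff$\ref{cond:RFCofCLSystem2} come from the input--disturbance correspondence \eqref{eq:implicitDefinitiond}, \eqref{eq:definitionOfu_2}. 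You also check that the standing BRS hypothesis of Theorem~\ref{thm:sufficientConditionsForLCwrtTDI} is not used in that chain, truncate to $u\diamond_\tau 0$ so the input lies in $\U$, and invoke boundedness-implies-continuation for global existence of the closed-loop solutions, all of which the paper's short proof leaves implicit.
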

\begin{proof}
    By Theorem \ref{thm:sufficientConditionsForLCwrtTDI}, $\PDESigma$ is Lipschitz continuous on compact intervals with respect to trajectory-dominated inputs. Then, by Theorem \ref{thm:BRSLyapunovFunction}, \ref{cond:RFCofCLSystem2}$\iff$\ref{cond:RFCofCLSystem3}.
    
    Moreover, for every $x \in X$ and $u \in \U_x^\eta$, there exists $d \in \D$ such that $\phi(\ph,x,u) = \phi_\CL(\ph,x,d)$ by \eqref{eq:implicitDefinitiond}. Hence, \ref{cond:RFCofCLSystem1}$ \implies $\ref{cond:RFCofCLSystem2}.

    Vice versa, for every $d \in \D$, we define $u \in \U_x^\eta$ as in \eqref{eq:definitionOfu_2}. Then, $\phi_\CL(\ph,x,d) = \phi(\ph,x,u)$ such that \ref{cond:RFCofCLSystem2}$ \implies $\ref{cond:RFCofCLSystem1}.
\end{proof}

\subsection{Relation of notions of Lipschitz continuity of the flow}
We claimed in Remark \ref{rem:relationOfLipschitzContinuityNotions} that Lipschitz continuity on compact intervals with respect to trajectory-dominated inputs does not imply Lipschitz continuity on compact intervals in the general case. We prove this claim with the following example.%

Consider the autonomous scalar system with $X = U = \R$,
\begin{equation}
    \Sigma_1\colon \quad \dot{x} = f(x,u),
\end{equation}
where for any $u \in \R$
\begin{equation*}
    f(x,u) :=
    \begin{cases}
        -\abs{u}x \ln|x| , &\text{if } x\neq 0, \\
        0, &\text{if } x = 0.
    \end{cases}
\end{equation*}
\begin{proposition}\label{prop:LipschitzonCIwrtTDIdoesNotImplyLipschitzonCI}
    $\Sigma_1$ satisfies the following properties:
    \begin{enumerate}
        \item\label{cond:LipschitzonCIwrtTDIdoesNotImplyLipschitzonCI1} $f$ is not locally Lipschitz continuous at $0$ in the first argument.
        \item\label{cond:LipschitzonCIwrtTDIdoesNotImplyLipschitzonCI2} The solutions of $\Sigma_1$ exist globally and are unique.
        \item\label{cond:LipschitzonCIwrtTDIdoesNotImplyLipschitzonCI3} The flow of $\Sigma_1$ is not Lipschitz continuous on compact intervals.
        \item\label{cond:LipschitzonCIwrtTDIdoesNotImplyLipschitzonCI4} Statement \ref{cond:sufficientConditionsForLCwrtTDI3} of Theorem \ref{thm:sufficientConditionsForLCwrtTDI} holds. In particular, the flow of $\Sigma_1$ is Lipschitz continuous on compact intervals with respect to trajectory-dominated inputs.        
    \end{enumerate}
\end{proposition}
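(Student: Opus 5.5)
The plan is to argue explicitly, using the substitution $y=\ln|x|$, which linearizes $\Sigma_1$ away from the origin.

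\emph{Item \ref{cond:LipschitzonCIwrtTDIdoesNotImplyLipschitzonCI1}.} Fix $u\neq 0$. For $x>0$ the difference quotient is
\[
\frac{f(x,u)-f(0,u)}{x}=-\abs{u}\ln x .
\]
It tends to $+\infty$ as $x\searrow 0$, so no Lipschitz constant can exist on any neighbourhood of $0$.

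\emph{Item \ref{cond:LipschitzonCIwrtTDIdoesNotImplyLipschitzonCI2}.} The origin is an equilibrium for every input. On $x>0$ (and symmetrically on $x<0$), $f$ is locally Lipschitz in $x$, uniformly for bounded $u$. Setting $y=\ln\abs{x}$ gives the linear time-varying equation $\dot y=-\abs{u(t)}\,y$. Its solution is $y(t)=y_0e^{-\int_0^t\abs{u(s)}\diff s}$, hence
\[
x(t)=\operatorname{sgn}(x_0)\exp\!\Big(\ln\abs{x_0}\,e^{-\int_0^t\abs{u}}\Big).
\]
Since $y$ stays finite, this solution never reaches $0$. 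Moreover $\abs{y}$ is nonincreasing, so $\abs{x(t)}\le\max\{\abs{x_0},1\}$ and solutions exist globally.

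Uniqueness is clear away from $0$. The only delicate case is a solution starting at (or touching) $0$, which might leave the origin. For this I invoke the Osgood criterion: on bounded sets we have $\abs{f(x,u)-f(z,u)}\le \norm{u}_\U\,\omega(\abs{x-z})$ with the modulus $\omega(r)=r\abs{\ln r}$ near $0$. Since $\int_0 \frac{\diff r}{r\abs{\ln r}}=\infty$, solutions are unique. This Osgood step, stated for $\LL^\infty$ inputs, is the main technical point. Alternatively, one can note directly that a solution leaving $0$ would have to pass through some $x\neq 0$ at which the backward solution is the explicit one above, and that solution never hits $0$.

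\emph{Item \ref{cond:LipschitzonCIwrtTDIdoesNotImplyLipschitzonCI3}.} Take $u\equiv 1$, $x_2=0$ and $x_1=\varepsilon\in(0,1)$. Then $\phi(t,\varepsilon,1)=\varepsilon^{e^{-t}}$, and for fixed $t>0$,
\[
\frac{\abs{\phi(t,\varepsilon,1)-\phi(t,0,1)}}{\varepsilon}=\varepsilon^{e^{-t}-1}\to\infty \quad (\varepsilon\searrow 0).
\]
This contradicts Definition~\ref{def:continuityOnCompactIntervals}.

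\emph{Item \ref{cond:LipschitzonCIwrtTDIdoesNotImplyLipschitzonCI4}.} I choose $\eta(s)=s$, which lies in $\KK_\infty\cap\operatorname{Lip}_1$. The bound from item \ref{cond:LipschitzonCIwrtTDIdoesNotImplyLipschitzonCI2}, $\abs{\phi(t,x,u)}\le\max\{\abs{x},1\}$, holds for every input. Hence $\Sigma_1$ is BRS, and it is trivially robustly forward complete with respect to trajectory-dominated inputs with this $\eta$ (take $\nu(r,t)=\max\{r,1\}$ in Proposition~\ref{prop:RFCcharacterization}).

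For \eqref{ineq:LipschitzConditionOnf}, consider $h_d(x)=f(x,d\abs{x})=-\abs{d}\,x\abs{x}\ln\abs{x}$, extended by $h_d(0)=0$. This function is $C^1$ on $\R$ with
\[
h_d'(x)=-\abs{d}\,\abs{x}\big(2\ln\abs{x}+1\big),\qquad h_d'(0)=0 .
\]
Thus $h_d'$ is bounded on $B_C$ uniformly in $\abs{d}\le 1$, and the mean value theorem yields $L_C$.

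So Statement~\ref{cond:sufficientConditionsForLCwrtTDI3} of Theorem~\ref{thm:sufficientConditionsForLCwrtTDI} holds. Viewing $\Sigma_1$ as $\PDESigma$ with $A=0$ (it is BRS, with unique solutions by item \ref{cond:LipschitzonCIwrtTDIdoesNotImplyLipschitzonCI2}), the chain \ref{cond:sufficientConditionsForLCwrtTDI3}$\implies$\ref{cond:sufficientConditionsForLCwrtTDI4}$\implies$\ref{cond:sufficientConditionsForLCwrtTDI5} gives Lipschitz continuity of the flow on compact intervals with respect to trajectory-dominated inputs.
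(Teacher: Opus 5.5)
Your proposal is correct. Items (i) and (iii) are argued as in the paper. Item (iv) has the same architecture: verify Statement~\ref{cond:sufficientConditionsForLCwrtTDI3} of Theorem~\ref{thm:sufficientConditionsForLCwrtTDI}, then apply the chain of that theorem with $A=0$. The differences are two technical choices.

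\emph{Item (ii).} You integrate the equation explicitly for \emph{every} input via $y=\ln|x|$. This gives global existence and the bound $|\phi(t,x,u)|\le\max\{|x|,1\}$ in one stroke. Your backup uniqueness argument is fully elementary: a solution leaving $0$ would, traced backwards from a point $x\neq 0$, obey $\dot y=-|u|y$, and so would stay bounded away from $0$. This makes the Osgood criterion unnecessary; the paper invokes it, and for merely measurable $u\in L^\infty$ it needs its Carath\'eodory version. The paper instead obtains the bound from the sign conditions $f\le 0$ for $x\ge 1$ and $f\ge 0$ for $x\le -1$.

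\emph{Item (iv).} You take $\eta(s)=s$. Then the closed-loop field $-|d|\,x|x|\ln|x|$ is $C^1$, with derivative $0$ at the origin, uniformly in $|d|\le 1$. The paper instead uses the piecewise margin $\eta(s)=-s/(2\ln s)$ on $[0,e^{-1}]$, chosen so that the logarithm cancels exactly and the field becomes $\tfrac12|d|\,x|x|$ near the origin. Your choice is simpler and shows that the identity already works as a growth margin. The mechanism is the same in both cases: the decay of $\eta$ at $0$ compensates the logarithmic singularity of $x\ln|x|$.
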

    
\begin{proof}
    \emph{Claim \ref{cond:LipschitzonCIwrtTDIdoesNotImplyLipschitzonCI1}:} Clear, as the derivative of the right hand side is unbounded when $x\to 0$.

    \emph{Claim \ref{cond:LipschitzonCIwrtTDIdoesNotImplyLipschitzonCI2}:}
    Let $C > 0$ and $u \in B_{C,\U}$. At first, we show that local solutions exist which are unique: If $x(0)\neq 0$, the claim is clear due to Picard-Lindelöf theorem since $f$ is locally Lipschitz continuous for $x \neq 0$. Let $x(0)=0$.
    Then, the map
    \[
    \omega(r) := 
    \begin{cases}
        - C \cdot r\ln(r), &\text{if } r \leq e^{-1}, \\
        Cr, &\text{if } r > e^{-1},
    \end{cases}
    \]
    serves as a modulus of continuity for $f$. 
    Since
    \[
    \int_0^{e^{-1}} \frac{1}{\omega(r)} \diff r
    =
    \int_0^{e^{-1}} \frac{1}{-C \cdot r\ln\argparen{r}} \diff r
    = \infty
    \]
    holds, the Osgood uniqueness criterion (see \cite[§ 12.VIII, p. 147]{Wal98}) applies.
    Therefore, solutions of $\Sigma_1$ are unique even at the non-Lipschitz equilibrium. 
    
    Moreover, $\Sigma_1$ is BRS: It holds that $f(x,u) \leq 0$ for $x \geq 1$ and $f(x,u) \geq 0$ for $x \leq -1$, i.e., for every solution $\phi = \phi(t,x,u)$, the inequality $\abs{\phi(t,x,u)} \leq \max\{1,\abs{x}\}$ is satisfied.  Then, for each initial condition and input, the (maximal) solutions exists for all $t \in \R_+$ (see Section \ref{sec:LyapunovODEs}).
    
    \emph{Claim \ref{cond:LipschitzonCIwrtTDIdoesNotImplyLipschitzonCI3}:} Let $u \equiv 1$. A straightforward computation shows that the trajectories 
    are given by
    \begin{align*}
        \phi(t,x,1) = \sign(x)\abs{x}^{e^{-t}},\quad x \in\R.
    \end{align*}
    Hence, for 
    $t = \ln 3$, we have
    \begin{align*}
        \phi(\ln 3,x,1) = \sqrt[3]{x},\quad x \in\R.
    \end{align*}
    Obviously, $\sqrt[3]{x}$ is not Lipschitz continuous in $0$ and therefore the flow of $\Sigma_1$ is not Lipschitz continuous on compact intervals.
    
    \emph{Claim \ref{cond:LipschitzonCIwrtTDIdoesNotImplyLipschitzonCI4}:}
    Let $\eta \in \KK_\infty$ be given by
    \begin{align*}
        \eta(s) \coloneq 
        \begin{cases}
            -\frac{s}{2\ln(s)}, &\text{if } s \in [0,e^{-1}],\\
            \frac{1}{2}s, &\text{if } s > e^{-1}.
        \end{cases}
    \end{align*}
    Then $\eta \in \operatorname{Lip}_1$ in view of the following:
    \begin{align*}
        \frac{\diff}{\diff s}\eta(s) &=
        \begin{cases}
            \frac{1}{2\ln(s)}\paren{\frac{1}{\ln(s)} - 1}, &\text{if } s \in [0,e^{-1}],\\
            \frac{1}{2}, &\text{if } s > e^{-1},
        \end{cases}
        \\
        &\leq 1.
    \end{align*}
    For $d \in [-1,1]$, we have
    \begin{align}
        f(x,d\eta\argparen{\abs{x}})
        &= 
        \begin{cases}
            0, &\text{if } \abs{x} = 0, \\
            -\abs{-d\frac{\abs{x}}{2\ln\abs{x}}}x \ln|x|, &\text{if } \abs{x} \in (0,e^{-1}],\\
            -\tfrac{1}{2}\abs{dx}x \ln|x|,\! &\text{if } \abs{x} > e^{-1},
        \end{cases}
        \nonumber\\
        &= 
        \begin{cases}
            \tfrac{1}{2}\abs{d}x\abs{x}, &\text{if } \abs{x} \in [0,e^{-1}],\\
            -\tfrac{1}{2}\abs{d} x\abs{x} \ln|x|,\! &\text{if } \abs{x} > e^{-1}.
        \end{cases}
        \label{eq:fOfFeedbackSystem}
    \end{align}

    We note that $f(x,d\eta\argparen{\abs{x}}) \leq 0$ for $x \geq 1$ and $f(x,d\eta\argparen{\abs{x}}) \geq 0$ for $x \leq -1$, i.e., for every solution $\overline \phi = \overline \phi(t,x,d)$, the inequality $\abs{\overline \phi(t,x,d)} \leq \max\{1,\abs{x}\}$ is satisfied. In particular, $\Sigma_1$ is robustly forward complete with respect to trajectory dominated inputs for growth margin $\eta$.
    
    Then, by \eqref{eq:fOfFeedbackSystem}, the map $\R \to \R$, $x \mapsto f\argparen{x,d\eta\argparen{\abs{x}}}$, $d \in[-1,1]$, is differentiable and for $\abs{x} \in [0,e^{-1}]$ we have
    \begin{align*}
        \abs{\tfrac{\diff}{\diff x}f(x,d\eta\argparen{\abs{x}})}
        = \tfrac{1}{2}\abs{d}\abs{x} + \tfrac{1}{2}\abs{d}x \tfrac{\abs{x}}{x}
        = \abs{d}\abs{x}
        \leq \abs{x},
    \end{align*}
     while whenever $\abs{x} > e^{-1}$, we obtain
    \begin{align*}
        &\abs{\tfrac{\diff}{\diff x}f(x,d\eta\argparen{\abs{x}})} \nonumber\\
        &\qquad=\tfrac{1}{2}\abs{-\abs{d} \abs{x} \ln|x| -\abs{d} x\tfrac{\abs{x}}{x} \ln|x| - \abs{d} x\abs{x} \tfrac{1}{x}} \nonumber\\
        &\qquad= \tfrac{1}{2}\abs{d}\abs{x} \abs{2\ln|x| + 1} 
        \leq \tfrac{1}{2}\abs{x} \max\argbraces{1,\abs{2x}} \\
        &\qquad\leq \max\argbraces{\abs{x}, x^2},
    \end{align*}
    where we used $\ln(s)\leq s - 1$ for all $s > 0$ and $\ln(s) > -1$ for $s > e^{-1}$ in the third step. Then, it holds that
    \begin{align}
        \abs{\tfrac{\diff}{\diff x}f(x,d\eta\argparen{\abs{x}})}
        &\leq \max\argbraces{\abs{x}, x^2}. \label{ineq:estimateSigma1LipschitzBound}
    \end{align}
    
    Now, pick any $C \geq 1$ and any $x_1,x_2 \in B_C$.
    By the mean value theorem applied to $x\mapsto f(x,d\eta(|x|))$ and \eqref{ineq:estimateSigma1LipschitzBound}, we obtain
    \begin{align*}
        \abs{f(x_1,d\eta\argparen{\abs{x_1}}) - f(x_2,d\eta\argparen{\abs{x_2}})}
        &\leq C^2 \abs{x_1 - x_2}.
    \end{align*}
    
    As $\eta \in \operatorname{Lip}_1$ and robust forward completeness with respect to trajectory dominated inputs for growth margin $\eta$, we see that Statement \ref{cond:sufficientConditionsForLCwrtTDI3} of Theorem \ref{thm:sufficientConditionsForLCwrtTDI} holds.

    In view of BRS of $\Sigma_1$, the application of Theorem \ref{thm:sufficientConditionsForLCwrtTDI} shows that $\Sigma_1$ is Lipschitz continuous on compact intervals with respect to trajectory-dominated inputs.
\end{proof}

\section{Lyapunov characterization of forward completeness for ODE systems}\label{sec:LyapunovODEs}

    Having proved a Lyapunov criterion for BRS of abstract infinite-dimensional control systems, we would like to prove a novel Lyapunov characterization of forward completeness for ODE systems with inputs of arbitrary magnitude.
    
Let $\Sigma$ be an ODE system given by
\begin{align*}
    \finiteSigma\colon \dot x(t) = f(x(t),u(t)), \quad t \in \I,
\end{align*}
where 
$x \in X = \R^n$, $U = \R^m$, for some natural numbers $m,n$, $u \in \Uc \coloneq L^\infty(\R_+,\R^m)$ and $f$ satisfies Statement \ref{cond:sufficientConditionsForLCwrtTDI1} of Theorem~\ref{thm:sufficientConditionsForLCwrtTDI}. Clearly, such ODEs are a subclass of semi-linear evolution equations $\PDESigma$.

Statement \ref{cond:sufficientConditionsForLCwrtTDI1} of Theorem~\ref{thm:sufficientConditionsForLCwrtTDI} ensures that for any initial condition $x \in X$ and any input $u\in\Uc$, the corresponding maximal solution (in the sense of Caratheodory) $\phi(\cdot,x,u)$ of $\finiteSigma$ exists and is unique on a certain finite interval. 
Furthermore, $\Sigma:=(X,\Uc,\phi)$ is a well-defined control system with the BIC property, see \cite[Theorem 1.16, Proposition 1.20]{Mir23}. 

\begin{theorem}\label{thm:FCLyapunovFunction}
    Let $\finiteSigma$ be an ODE system and Statement \ref{cond:sufficientConditionsForLCwrtTDI1} of Theorem~\ref{thm:sufficientConditionsForLCwrtTDI} be satisfied.
	The following statements are equivalent:
    \begin{enumerate}
        \item\label{cond:FCLyapunovFunction1} $\Sigma$ is FC.
        \item\label{cond:FCLyapunovFunction2} $\Sigma$ is BRS.
        \item\label{cond:FCLyapunovFunction3} There exists a (continuous) BRS Lyapunov function for $\Sigma$.
        \item\label{cond:FCLyapunovFunction4} There exists a Lipschitz continuous BRS Lyapunov function for $\Sigma$.        
        \item\label{cond:FCLyapunovFunction5} $\Sigma$ is robustly forward complete with respect to trajectory-dominated inputs.
    \end{enumerate}
\end{theorem}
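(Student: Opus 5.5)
The plan is to reduce everything to Theorem~\ref{thm:BRSLyapunovFunction} and Theorem~\ref{thm:sufficientConditionsForLCwrtTDI}. The only genuinely new implication is \ref{cond:FCLyapunovFunction1}$\implies$\ref{cond:FCLyapunovFunction2}. For this we use the classical result of \cite{LSW96}: for ODEs $\dot x = f(x,u)$ with $f$ locally Lipschitz, forward completeness is equivalent to boundedness of reachability sets. Statement \ref{cond:sufficientConditionsForLCwrtTDI1} of Theorem~\ref{thm:sufficientConditionsForLCwrtTDI} is a Lipschitz condition on bounded sets of $\R^n\times\R^m$, so it is in particular local Lipschitz continuity. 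The result of \cite{LSW96} is formulated for inputs of arbitrary (not a priori bounded) magnitude, which is exactly the BRS notion of Definition~\ref{def:BRS}. Hence FC implies BRS. The converse \ref{cond:FCLyapunovFunction2}$\implies$\ref{cond:FCLyapunovFunction1} holds by definition, since BRS is only defined for forward complete systems.

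Next we check the regularity hypothesis of Theorem~\ref{thm:BRSLyapunovFunction} under BRS. Since $\finiteSigma$ is a semi-linear evolution equation with $A=0$ (so $T(t)=\mathrm{id}$, $M=1$, $\lambda=0$), and mild solutions coincide with Carathéodory solutions, Theorem~\ref{thm:sufficientConditionsForLCwrtTDI} applies once $\Sigma$ is BRS. The chain \ref{cond:sufficientConditionsForLCwrtTDI1}$\implies\dots\implies$\ref{cond:sufficientConditionsForLCwrtTDI5} then shows that the flow of $\Sigma$ is Lipschitz continuous on compact intervals with respect to trajectory-dominated inputs. Theorem~\ref{thm:BRSLyapunovFunction} therefore yields \ref{cond:FCLyapunovFunction2}$\iff$\ref{cond:FCLyapunovFunction3}$\iff$\ref{cond:FCLyapunovFunction5}, and also \ref{cond:FCLyapunovFunction2}$\implies$\ref{cond:FCLyapunovFunction4}. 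Statement \ref{cond:FCLyapunovFunction4} follows because Lipschitz continuity on bounded sets coincides with local Lipschitz continuity in finite dimensions; this is how I read ``Lipschitz continuous'' here.

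There is a subtlety in the reverse directions \ref{cond:FCLyapunovFunction3}$\implies$\ref{cond:FCLyapunovFunction2} and \ref{cond:FCLyapunovFunction5}$\implies$\ref{cond:FCLyapunovFunction2}: we cannot assume BRS a priori to obtain the regularity hypothesis. For \ref{cond:FCLyapunovFunction3} (and hence \ref{cond:FCLyapunovFunction4}) we use \cite[Proposition 4.4]{Mir23e} directly. It only needs the BIC property, which holds for $\finiteSigma$ by \cite[Theorem 1.16, Proposition 1.20]{Mir23}, and it gives forward completeness and BRS without any Lipschitz assumption on the flow.

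For \ref{cond:FCLyapunovFunction5}$\implies$\ref{cond:FCLyapunovFunction2}, forward completeness is built into the definition of RFC with respect to trajectory-dominated inputs, so \ref{cond:FCLyapunovFunction5}$\implies$\ref{cond:FCLyapunovFunction1}$\implies$\ref{cond:FCLyapunovFunction2} via the step above. This is the main point of care. The obstacle I expect is making the citation of \cite{LSW96} precise: their hypotheses (for example, joint local Lipschitz continuity and measurable essentially bounded inputs) must match our setting with unbounded input magnitude. If needed, I would instead argue via RFC of the auxiliary system $\Sigma_\eta$ with inputs in $\D$ and invoke Corollary~\ref{cor:RFCofCLSystem}.
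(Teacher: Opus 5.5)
Your proposal is correct and follows the paper's proof. You get \ref{cond:FCLyapunovFunction1}$\iff$\ref{cond:FCLyapunovFunction2} from \cite[Proposition 5.1]{LSW96}; under BRS, Theorem~\ref{thm:sufficientConditionsForLCwrtTDI} supplies the regularity needed for Theorem~\ref{thm:BRSLyapunovFunction}; and your explicit treatment of the reverse implications (via \cite[Proposition 4.4]{Mir23e} with BIC, and via forward completeness being built into \ref{cond:FCLyapunovFunction5}) makes precise a point the paper leaves implicit. Your suggested fallback would not work, however: Corollary~\ref{cor:RFCofCLSystem} already assumes Statement \ref{cond:sufficientConditionsForLCwrtTDI3} of Theorem~\ref{thm:sufficientConditionsForLCwrtTDI}, which contains robust forward completeness with respect to trajectory-dominated inputs, so it cannot replace \cite{LSW96} for FC$\implies$BRS.
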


\begin{proof}
In \cite[Proposition 5.1]{LSW96}, it was shown that $\finiteSigma$ is forward complete if and only if $\finiteSigma$ has bounded reachability sets.
Having BRS of $\finiteSigma$, Theorem~\ref{thm:sufficientConditionsForLCwrtTDI} guarantees the necessary regularity of the flow of the evolution equation $\PDESigma$ to invoke our converse Lyapunov Theorem~\ref{thm:BRSLyapunovFunction} to get the remaining equivalences.    

    Note that the equivalences between items \ref{cond:FCLyapunovFunction2} -- \ref{cond:FCLyapunovFunction5} can be obtained under somewhat weaker assumptions on the system $\Sigma$. However, to ensure the equivalence between items \ref{cond:FCLyapunovFunction1} and \ref{cond:FCLyapunovFunction2}, we need at least continuity of $f$ together with Lipschitz continuity of $f$ with respect to the first variable (see \cite[Theorem 1.31]{Mir23} for the precise formulation).
\end{proof}

This result is new and differs essentially from \cite[Theorem 2]{AnS99}, where Lyapunov characterization of forward completeness for systems with inputs restricted to the set $\D:=\{u \in\Uc: \|u\|_\Uc \leq R\}$ with a given finite $R$ has been shown. Infinite-dimensional extension of this result from \cite{AnS99} has been obtained in \cite{Mir23e}, where robust forward completeness has been characterized in terms of Lyapunov functions.

\section{Undisturbed systems}\label{sec:BRSUndisturbed}
We state the main result Theorem \ref{thm:BRSLyapunovFunction} for \emph{undisturbed systems}, i.e., systems which do not depend on $u$ such that the trajectories reduce to $\phi(t,x,u) = \phi(t,x)$ for all $t \in \R_+$, $x \in X$ and $u \in \U$. Then, Lipschitz continuity on compact intervals does not depend on the inputs and BRS is equivalent to both robust forward completeness with respect to inputs in $\D$ and with respect to trajectory-dominated inputs.

Note that $V \in C(X,\R_+)$ is a BRS Lyapunov function for undisturbed system $\Sigma$, if the following two conditions hold:
\begin{enumerate}
    \item\label{def:undistBRSLyapunovFunctionCondition1} There exist $\alpha_1, \alpha_2 \in \KK_\infty$ and $C \geq 0$ such that
    \begin{align*}
        \alpha_1\!\paren{\norm{x}_X} &\leq V(x) \leq \alpha_2\!\paren{\norm{x}_X} + C, \quad x \in X.
    \end{align*}
    \item\label{def:undistBRSLyapunovFunctionCondition2*} For all $x \in X$, it holds that
    \begin{align}
    \label{eq:RFC-estimate}
        \dot V(x) \leq V\!\paren{x}.
    \end{align}
\end{enumerate}

\begin{remark}
    In \cite{Mir23e}, instead of \eqref{eq:RFC-estimate} the estimate
\[
\exists M>0:\quad \dot V(x) \leq V\!\paren{x} + M\quad x\in X
\]
was employed. 
However, setting $W(x):=V(x)+M$ leads to $\dot W(x) \leq W(x)$ and $W(x) \leq \alpha_2\!\paren{\norm{x}_X} + C+M$, thus our formulation here and that in \cite{Mir23e} are equivalent.
\end{remark}

For undisturbed systems, our characterization of BRS takes the form
\begin{corollary}
\label{cor:UndisturbedLyapunovFunction}
	Let $\Sigma$ be an undisturbed system, 
    and the flow of $\Sigma$ be Lipschitz continuous on compact intervals.
    
    The following statements are equivalent:
    \begin{enumerate}
        \item\label{cond:UndisturbedLyapunovFunction1} $\Sigma$ is BRS.
        \item\label{cond:UndisturbedLyapunovFunction2} $\Sigma$ is robustly forward complete with respect to inputs in $\D$.
        \item\label{cond:UndisturbedLyapunovFunction3} There is a (continuous) BRS Lyapunov function for $\Sigma$.
        \item\label{cond:UndisturbedLyapunovFunction4} There exists a Lipschitz continuous BRS Lyapunov function for $\Sigma$.        
        \item\label{cond:UndisturbedLyapunovFunction5} $\Sigma$ is robustly forward complete with respect to trajectory-dominated inputs.
    \end{enumerate}
\end{corollary}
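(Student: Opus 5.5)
The plan is to reduce Corollary~\ref{cor:UndisturbedLyapunovFunction} to Theorem~\ref{thm:BRSLyapunovFunction}. The only genuinely new item is \ref{cond:UndisturbedLyapunovFunction2}, which we attach to the existing chain via BRS. The key observation is that for an undisturbed system $\phi(t,x,u)=\phi(t,x)$ does not depend on $u$. Consequently, for every $x\in X$ and $\eta\in\KK_\infty$, the trajectories generated by inputs in $\U_x^\eta$, in $\D$, or in $\U$ are all the same single trajectory $\phi(\cdot,x)$.

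\textbf{Step 1: the Lipschitz hypothesis.} We check that Lipschitz continuity on compact intervals implies Lipschitz continuity on compact intervals with respect to trajectory-dominated inputs (Definition~\ref{def:continuityOnCompactIntervalswrtInputs}) for any $\eta\in\KK_\infty$. Given $\tau,C>0$, let $x_1,x_2\in B_C$ and $u_1\in\U_{x_1}^\eta$, and choose $u_2:=0\in\U_{x_2}^\eta$. Then $\phi(t,x_1,u_1)-\phi(t,x_2,u_2)=\phi(t,x_1,0)-\phi(t,x_2,0)$. By Definition~\ref{def:continuityOnCompactIntervals} with $u=0\in B_{C,\U}$, this is bounded by $L(\tau,C)\norm{x_1-x_2}_X$ for $t<\tau$. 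Hence Theorem~\ref{thm:BRSLyapunovFunction} applies and gives \ref{cond:UndisturbedLyapunovFunction1}$\iff$\ref{cond:UndisturbedLyapunovFunction3}$\iff$\ref{cond:UndisturbedLyapunovFunction4}$\iff$\ref{cond:UndisturbedLyapunovFunction5}.

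There is one point to check here. Theorem~\ref{thm:BRSLyapunovFunction} produces a Lyapunov function that is Lipschitz continuous on bounded sets, whereas item~\ref{cond:UndisturbedLyapunovFunction4} speaks of a Lipschitz continuous one. I would read ``Lipschitz continuous'' in the same sense as in Theorem~\ref{thm:FCLyapunovFunction}, namely Lipschitz on bounded sets, and note this explicitly. I would also verify that the decay condition in Definition~\ref{def:BRSLyapunovFunction}\ref{def:BRSLyapunovFunctionCondition2*} becomes \eqref{eq:RFC-estimate}. This holds because $\dot V_u(x)=\dot V(x)$ is independent of $u$, and every $x$ satisfies the premise $\chi(\norm{u}_\U)\le\norm{x}_X$ with $u=0$. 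Conversely, \eqref{eq:RFC-estimate} implies the implication for every $\chi$.

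\textbf{Step 2: item \ref{cond:UndisturbedLyapunovFunction2}.} We show \ref{cond:UndisturbedLyapunovFunction1}$\iff$\ref{cond:UndisturbedLyapunovFunction2}. Both suprema in Definition~\ref{def:BRS} and Definition~\ref{def:RFC} reduce to $\sup_{\norm{x}_X<C,\,t<\tau}\norm{\phi(t,x)}_X$, since the input ranges over a nonempty set ($0$ lies both in $B_{C,\U}$ and in $\D$) and does not affect $\phi$. The two properties are therefore literally the same condition.

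I expect no real obstacle. The only delicate points are the ones flagged in Step~1: the meaning of ``Lipschitz continuous'' in item~\ref{cond:UndisturbedLyapunovFunction4}, and the translation of the Lyapunov conditions, both of which I would handle by the remarks given there.
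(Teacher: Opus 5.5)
Your proposal is correct and is essentially the paper's own argument, which the paper only sketches in the sentence preceding the corollary. Since $\phi$ does not depend on $u$, Lipschitz continuity on compact intervals gives the trajectory-dominated version; your choice $u_2=0$ works for every $\eta\in\KK_\infty$, which also covers the shrinking of $\eta$ done in Lemma~\ref{lem:RFCtoBRSLF}. BRS and both robust forward completeness notions reduce to the same supremum, and Theorem~\ref{thm:BRSLyapunovFunction} supplies the rest. Your reading of ``Lipschitz continuous'' in item~\ref{cond:UndisturbedLyapunovFunction4} as ``Lipschitz continuous on bounded sets'' is the right one, since that is exactly what Lemma~\ref{lem:RFCtoBRSLF} produces.
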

    
It can be still of advantage to extend an undisturbed system to a control system by an appropriate choice for an (fictitious) input. To illustrate this phenomenon, we consider the following example.
\begin{example}\label{ex:undisturbedSystem}
    Consider the autonomous scalar system with $X = U = \R$,
    \begin{equation*}
        \Sigma_2\colon \quad \dot{x} = f(x),
    \end{equation*}
    where
    \begin{equation*}
        f(x) :=
        \begin{cases}
            -x \ln|x| , &\text{if } x\neq 0, \\
            0, &\text{if } x = 0.
        \end{cases}
    \end{equation*}
    As shown in Proposition \ref{prop:LipschitzonCIwrtTDIdoesNotImplyLipschitzonCI}, the flow of $\Sigma_2$ is not Lipschitz continuous  on compact intervals. Therefore, Corollary \ref{cor:UndisturbedLyapunovFunction} cannot be used to conclude existence of a BRS Lyapunov function. 
    However, in Proposition \ref{prop:LipschitzonCIwrtTDIdoesNotImplyLipschitzonCI} we showed that the flow of $\Sigma_1$ is Lipschitz continuous on compact intervals with respect to trajectory-dominated inputs. 
    
    That is, a BRS Lyapunov function for $\Sigma_1$ exists. As a consequence, we obtain a BRS Lyapunov-like function for $\Sigma_2$ by setting $u \equiv 1$.
\end{example}

Note however, that Lyapunov functions constructed via this approach of extending the system to a control system have the form
\begin{enumerate}
    \item\label{def:varUndistBRSLyapunovFunctionCondition1} There exist $\alpha_1, \alpha_2 \in \KK_\infty$ and $C \geq 0$ such that
    \begin{align*}
        \alpha_1\!\paren{\norm{x}_X} &\leq V(x) \leq \alpha_2\!\paren{\norm{x}_X} + C, \quad x \in X.
    \end{align*}
    \item\label{def:varUndistBRSLyapunovFunctionCondition2*} There exists $\eps > 0$ such that for all $x \in X$ and $u \in \U$, it holds that
    \begin{align*}			
        \eps \leq
        \norm x_X 
        \qquad\implies
        \qquad \dot V(x) \leq V\!\paren{x}.
    \end{align*}
\end{enumerate}
Hence, the difference between the two approaches is that a neighborhood of the problematic point $x = 0$ is excluded in the latter.

If a control system is forward complete, then the existence of a Lyapunov function as above will prove BRS, as for this property, we are not interested in what happens within any given ball of a finite radius, and we are rather interested in what happens outside of this ball.

\section{Conclusion}\label{sec:conclusion}
In this article, we presented a notion of BRS Lyapunov function and developed regularity conditions such that the existence of a BRS Lyapunov function is equivalent to BRS. Moreover, we provided several criteria for BRS. Here, especially robust forward completeness with respect to trajectory-dominated inputs should be mentioned.

As a future step, Lyapunov characterizations for certain non-forward complete infinite-dimensional systems with outputs should be developed. In the special case of finite-dimensional ODE systems, the Lyapunov characterization of forward completeness (which is equivalent to BRS for such systems) was essential \cite{AnS99} to achieve this goal. However, in the infinite-dimensional case, the concept of unboundedness observability must be transferred to infinite-dimensional systems appropriately.

Another interesting research direction is to use our strategy to give a direct proof of a Lyapunov characterization of ISS for infinite-dimensional systems. The covered system class may be extended to include abstract control systems, as in this paper, as opposed to the existing approach \cite{MiW17c}, developed for evolution equations with Lipschitz nonlinearities.

\section*{References}
\bibliographystyle{IEEEtran}
\bibliography{Bibliothek,MyPublications}

\begin{IEEEbiography}[{\includegraphics[width=1in,height=1.25in,clip,keepaspectratio]{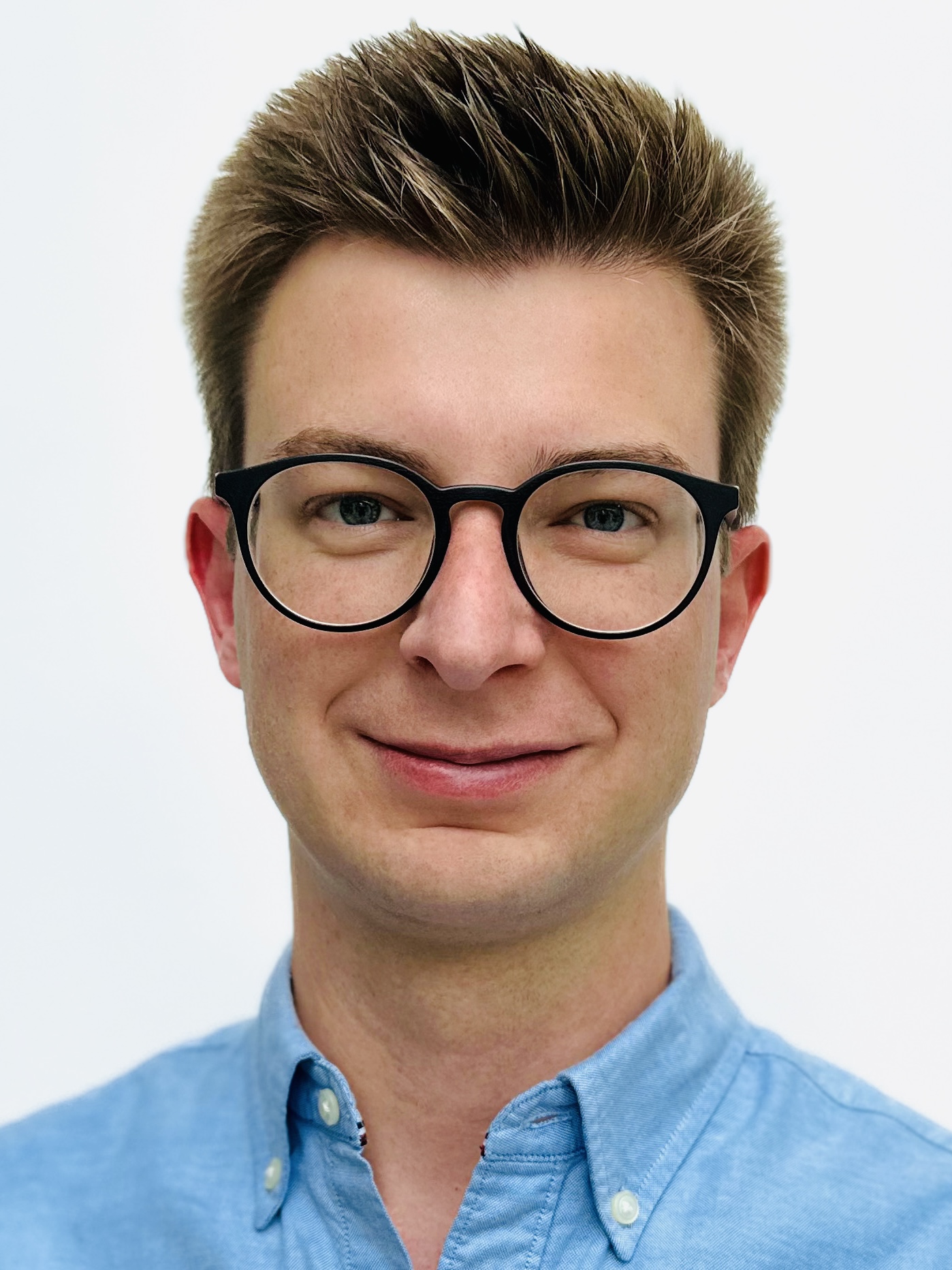}}]{Patrick Bachmann} received his Bachelor's degree in business mathematics from the University of Mannheim, Germany, in 2015 and his Master's degree in Mathematics from Karlsruhe Institute of Technology, Germany, in 2018. He worked as a research assistant at the Technical University of Kaiserslautern, Germany, and the University of Würzburg, Germany, and the University of Bayreuth, Germany. Currently, he is pursuing his PhD degree in Mathematics under the supervision of Sergey Dashkovskiy and Andrii Mironchenko at University of Würzburg. His research interests include impulsive systems, stability and control theory, Lyapunov functions, and infinite-dimensional systems.
\end{IEEEbiography}

\begin{IEEEbiography}[{\includegraphics[width=1in,height=1.25in,clip,keepaspectratio]{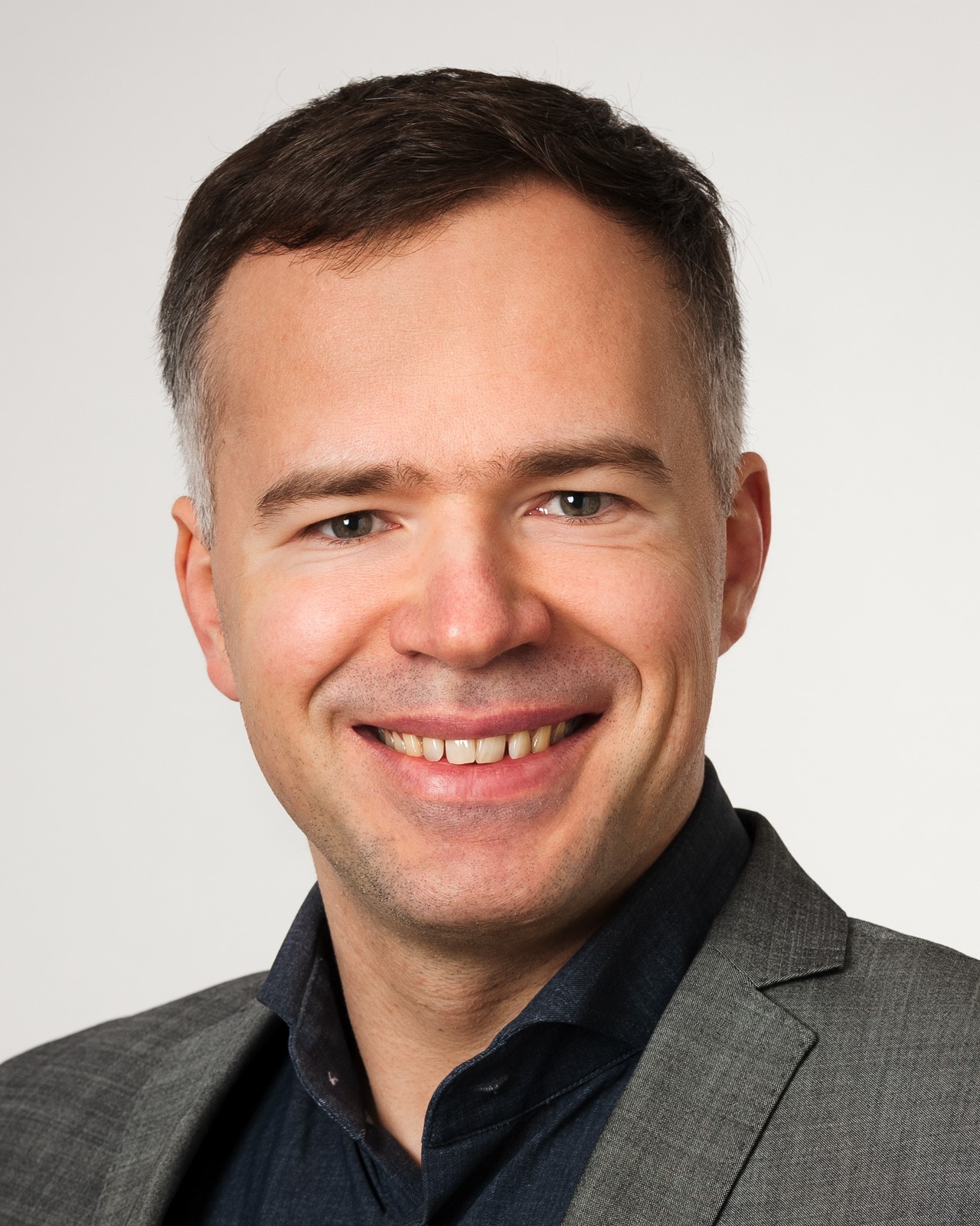}}]{Andrii Mironchenko} (M'21-SM'22) 
was born in 1986 in Odesa, Ukraine. He received his Ph.D. degree in mathematics from the University of Bremen, Germany (2012), and a habilitation degree from the University of Passau, Germany (2023). He was a Postdoctoral Fellow of the Japan Society for Promotion of Science (2013–2014). Since December 2024, he has been with the Department of Mathematics, University of Bayreuth, Germany.

Dr. Mironchenko is the author of the monograph “Input-to-State Stability: Theory and Applications” (Springer, 2023). He is an Associate Editor in Systems \& Control Letters (2023 --) and IEEE Transactions on Automatic Control (2026 --). A. Mironchenko is a co-founder and co-organizer of the biennial Workshop series “Stability and Control of Infinite-Dimensional Systems” (SCINDIS, 2016 --) and ISS Online Seminar (2021 --). He is a recipient of IEEE CSS George S. Axelby Outstanding Paper Award (2023), Outstanding Habilitation Award of the University of Passau (2024), Heisenberg grant (2024), and von Kaven Award (2025) from the German Research Foundation.

His research interests include stability theory, nonlinear systems theory, distributed parameter systems, hybrid systems, and applications of control theory to biological systems and distributed control. 
\end{IEEEbiography}

\end{document}